\def\figurename{Figure} 
\renewcommand{\fnum@figure}[1]{\figurename~\thefigure.}
\def\tablename{Table} 
\renewcommand{\fnum@table}[1]{\tablename~\thetable.}
\newtheorem{theorem}{Theorem}[section]
\newtheorem{lemma}[theorem]{Lemma}
\newtheorem{proposition}[theorem]{Proposition}
\theoremstyle{definition}
\newtheorem{definition}[theorem]{Definition}
\theoremstyle{remark}
\newtheorem{remark}[theorem]{Remark}
\numberwithin{equation}{section}
\def\P{\mathbb P}
\def\R{\mathbb R}
\def\E{\mathbb E}
\def\Q{\mathbb Q}
\def\E{\mathbb E}
\def\N{\mathbb N}
\begin{document}

\title{Stochastic viscosity solutions of reflected stochastic partial
differential equations with non-Lipschitz coefficients\thanks{This work is supported by the National Natural Science Foundation of China (N°11871076)}}

\author{Yong Ren$^{\,a,}$ \thanks{yongren@126.com}\;\; Jean Marc Owo$^{\,b,}$ \thanks{owo$_{-}$jm@yahoo.fr}\;\; and Auguste Aman$^{\,b,}$\thanks{augusteaman5@yahoo.fr, corresponding author}\\\\
a. Anhui Normal University, Department of Mathematics, Wuhu, Chine\;\;\; \;\;\;\;\;\;\;\;\;\;\;\;\;\;\;\;\;\;\;\;\;\;\;\;\\
b. Université Félix H. Boigny, UFR Mathématiques et Informatique,\;\; \;\;\;\;\;\;\;\;\;\;\;\;\;\; \;\;\;\;\;\;\;\;\;\;\;\;\;\; \;\\  Abidjan, C\^{o}te d'Ivoire\;\; \;\;\;\;\;\;\;\;\;\;\;\;
}

\date{}
\maketitle \thispagestyle{empty} \setcounter{page}{1}

\thispagestyle{fancy} \fancyhead{}
 \fancyfoot{}
\renewcommand{\headrulewidth}{0pt}

\begin{abstract}
This paper, is an attempt to extend the notion of stochastic viscosity solution to reflected semi-linear stochastic partial
differential equations (RSPDEs, in short) with non-Lipschitz condition on the coefficients. Our method is fully probabilistic
and use the recently developed theory on reflected backward doubly stochastic differential equations (RBDSDEs,
in short). Among other, we prove the existence of the stochastic viscosity solution, and further extend the nonlinear
Feynman-Kac formula to reflected SPDEs, like one appear in \cite{2}. Indeed, in their recent work, Aman and Mrhardy \cite{2}
established a stochastic viscosity solution for semi-linear reflected SPDEs with nonlinear Neumann boundary condition
by using its connection with RBDSDEs. However, even Aman and Mrhardy consider a general class of reflected SPDEs,
all their coefficients are at least Lipschitz. Therefore, our current work can be thought of as a new generalization of a now
well-know Feymann-Kac formula to SPDEs with large class of coefficients, which does not seem to exist in the literature.
In other words, our work extends (in non boundary case) Aman and Mrhardy's paper.
\end{abstract}

\vspace{.08in}\textbf{MSC}:Primary: 60F05, 60H15; Secondary: 60J60\\ 
\vspace{.08in}\textbf{Keywords}: Stochastic viscosity solution, reflected backward doubly stochastic differential equations, non-Lipschitz
conditions.

\section{Introduction}
The notion of the viscosity solution for a partial differential equation, first introduced in 1983 by Crandall and Lions
[9], has had tremendous impact on the modern theoretical and applied mathematics. Today the theory has become an
indispensable tool in many applied fields, especially in optimal control theory and numerous subjects related to it. We
refer to the well-known "User's Guide" by Crandall et al. \cite{10} and the books by Bardi et al. \cite{5} and Fleming and Soner \cite{12} for a detailed account for the theory of (deterministic) viscosity solutions.
Given the importance of the theory, as well as the fact that almost all the deterministic problems in these applied fields
have their stochastic counterparts, it has long been desired that the notion of viscosity solution be extended to stochastic
partial differential equations; and consistent efforts have been made to prove or disprove such a possibility. Some of articles by Lions and Souganidis \cite{19,20} have finally shown an encouraging sign on this subject. Indeed, in \cite{19}, the
notion of stochastic viscosity solution was introduced for the first time; they use the so-called "stochastic characteristic" to
remove the stochastic integrals from a SPDEs, so that the stochastic viscosity solution can be studied $\omega$-wisely. They also,
in \cite{20}, derive the applications of such solutions to, among other things, pathwise stochastic control and front propagation
and phase transitions in random media were presented. Next, two others notions of stochastic viscosity solution of SPDEs
have been considered by Buckdahn and Ma respectively in \cite{6, 7} and \cite{8}. Roughly speaking, in \cite{6,7}, Buckdahn and Ma
consider a SPDE the following: for all $(t,x)\in [0,T]\times\R^n$ 
\begin{eqnarray*}
\left\{
\begin{array}{lll}
du(t,x) &=&\{Lu(t,x)+f(t,x,u(t,x),\sigma^*(x)Du(t,x))\}dt+\sum_{i=1}^{d}g_i(t,x,u(t,x))\circ \overleftarrow{dB}_t,\\	\\
u(T,x) & = & h(x),
\end{array}
\right.
\end{eqnarray*}
where $\circ \overleftarrow{dB}_t$ denote backward Stratonovich differential integral with a standard d-dimensional Brownian motion. The
function $f$, $g$, $h$ are measurable and $L$ is the second-order differential operator defined by:
where \begin{eqnarray}
L =\sum_{i,j=1}^{d}\sum_{l=1}^{k}\sigma_{il}(x)\sigma_{lj}(x)\partial_{x_{i}x_{j}}+\sum_{i=1}^{n}b_i(x)\partial_{x_i},\label{1.2}
\end{eqnarray}
in which $\sigma(.) = [\sigma_{ij}]^{n,k}_{i,j=1}, \; (b_1,\cdot\cdot\cdot, b_n)$ are certain measurable function and $\sigma^{*}(.)$ denotes the transpose of $\sigma(.)$. They
used the Doss-Sussman-type transformation (or the robust form). Although technically different, their method has the
same spirit as one appearing in \cite{19,20}. More precisely, they shown that under such a random transformation, SPDEs
can be converted to an ordinary PDE with random coefficients. Hence, they give a sensible definition of the stochastic
viscosity solution, which will coincide with the deterministic viscosity solution when f is deterministic and $g\equiv 0$. They
also naturally established the existence and uniqueness of this stochastic viscosity solution to SPDE. In \cite{8}, Buckdahn
and Ma show that an Itô-type random field with reasonably regular "integrands" can be expanded, up to the second
order, to the solutions to a fairly large class of stochastic differential equations with parameters, or even fully-nonlinear
stochastic partial differential equations, whenever they exist. Using such analysis they then propose a new definition of
stochastic viscosity solution for fully nonlinear stochastic PDEs, by the notion of stochastic sub and super jets in the spirit
of its deterministic counterpart. They also prove that this new definition is actually equivalent to the one proposed in
their previous works \cite{6} and  \cite{7}, at least for a class of quasilinear SPDEs. In all their previous three works, to establish existence and/or uniqueness of stochastic viscosity solution for SPDE, Buckdahn and Ma used the theory of backward
doubly SDEs introduced earlier by Pardoux and Peng  \cite{24} which is of the form
\begin{eqnarray}
Y_t = \xi+
\int_t^T f (s,Y_s,Z_s)ds+
\int_t^T g(s,Y_s,Z_s) \overleftarrow{dB}_s-\label{12}
\int_t^T Z_sdW_s
\end{eqnarray}
This kind of BDSDEs has a practical background, particularly in finance. In such domain, the extra noise $B$ can be regarded
as some extra information, which can not be detected in the financial market, but is available to some particular investors.
In their recent work, Aman and Mrhardy \cite{2} consider the following obstacle problem for SPDEs with nonlinear
Neumann boundary condition that we write formally as: for $\P$-a.e. $\omega\in\Omega$,
\begin{eqnarray*}
OP^{f,\phi,g,h,l}
\left\{
\begin{array}{ll}
(i) & \min\left\{u(t,x)-h(t,x),-\frac{\partial u(t,x)}{\partial t}-Lu(t,x)-f(t,x,u(t,x),\sigma^*(x)Du(t,x))\right.\\\\
&\left.-g(t,x,u(t,x)).\overleftarrow{\dot{B}}_t\right\}=0,\;\; (t,x)\in [0,T]\times \Theta,\\\\
(ii) & \frac{\partial u(t,x)}{\partial t}(t,x)+\phi(t,x,u(t,x))=0 \;\; (t,x)\in [0,T]\times \partial\Theta,\\\\
(iii) & u(T,x)=l(x),\;\; x\in\Theta,
\end{array}
\right.
\end{eqnarray*}

where $\Theta$ is a connected bounded domain included in $\R^d,\, (d\geq 1)$ and $f, l$ and $h$ are measurable functions. Finally 
$\overleftarrow{\dot{B}}_t=\frac{d\overleftarrow{\dot{B}}_t}{dt}$ is, at least formally, the time derivative of the standard Brownian motion $B$ called "white noise" and $g.\overleftarrow{\dot{B}}_t$ means the scalar product. They derived and proved a stochastic viscosity solution of this SPDE by a direct links with the
following reflected generalized BDSDE with Lipschitz coefficients: for all $t\in[0,T]$
\begin{eqnarray}
Y_t = \xi+\int_t^T f (s,Y_s,Z_s)ds+\int_t^T g(s,Y_s,Z_s) \overleftarrow{dB}_s+K_T-K_t-\int_t^T Z_sdW_s.\label{1.3}
\end{eqnarray} 
The increasing process $K$ is introduced to push the component $Y$ upwards so that it may remain above the given obstacle
process $S$. This push is minimal such that
\begin{eqnarray}
\int_t^T(Y_t-S_t)dK_t=0\label{flat}
\end{eqnarray}
which means that the push is only done when the constraint is saturated i.e. $Y_t = S_t$. In practice (finance market for example), the process $K$ can be regarded as the subsidy injected by a government in the market to allow the price process $Y$ of a commodity (cocoa, by example) to remain above a threshold price process $S$. Before Aman's work, Bahlali et al. \cite{4}) proved without application to reflected SPDE, existence and uniqueness result (resp. existence of minimal or maximal solution) of the previous RBDSDE when $\phi\equiv 0$, under global Lipschitz (resp. continuous) condition on the coefficient $f$. 
Unfortunately, the global Lipschitz or continuous condition cannot be satisfied in certain models that limits the scope of the result of Aman and Mrhardy \cite{2} for several applications (finance, stochastic control, stochastic games, SPDEs, etc,...). Some authors have previously tried to give weak conditions for reflected BDSDEs. We can cite the work of Aman \cite{1}, Aman and Owo \cite{3}. However, all this conditions remains insufficient to take into account all situations. For example, let consider the function $f$ and $g$ defined respectively by
\begin{eqnarray}\label{1.4}
f(t,y,z)=\frac{e^{-|y|}}{T^{1/4}}+\sqrt{\frac{C}{2}}z,\;\; g(t,y,z)=\frac{e^{-|y|}}{T^{1/4}}+\sqrt{\frac{\alpha}{2}}z
\end{eqnarray}
for any $C > 0$ and $0 < \alpha < 1$. It not difficult to prove that $f$ and $g$ are not Lipschitz and then the can not use the previous result to prove that RBDSDE with generator the function f and g defined in \eqref{1.4}.

To correct this shortcoming, we relax in this paper the global Lipschitz condition on the coefficients $f$ and $g$ to
following non-Lipschitz assumptions.\\

{\bf Main Assumptions}\\
There exist a non-random function $\rho:[0,T]\times\R^{+}\rightarrow \R^{+}$ which is not necessarily continuous in its first argument and satisfying "{\bf Condition A}", and two constants $C>0$ and $0< \alpha<1$ such that
\begin{eqnarray*}
\left\{
\begin{array}{lll}
|f(t,y_1,z_1)-f(t,y_2,z_2)|^2 &\leq &\rho(t,|y_1-y_2|^2)+C\|z_1-z_2\|^2\\\\
\|g(t,y_1,z_1)-g(t,y_2,z_2)\|^2 &\leq &\rho(t,|y_1-y_2|^2)+C\|z_1-z_2\|^2.
\end{array}
\right.
\end{eqnarray*}

{\bf Condition A}\\
For fixed $t\in[0,T],\; \rho(t,.)$ is continuous, concave and non-decreasing with $\rho(t,0) = 0$ such that:
\begin{enumerate}
\item [$(i)$] for fixed $u\in\R^{+}$,
\begin{eqnarray*}
\int_0^T\rho(t,u)dt<+\infty,
\end{eqnarray*}
[$(ii)$] for any $M>0$, if there exist a function $u:[0,T]\rightarrow \R^{+}$ solution of the following ordinary differential equation
\begin{eqnarray}\label{1.5}
\left\{
\begin{array}{lll}
u'(t)&=&-M\rho(t,u),\\\\
u(T)&=& 0.
\end{array}
\right.
\end{eqnarray}
then $u$ is unique and $u(t)\equiv 0,\;  t\in [0,T]$.  
\end{enumerate}
In this context, our paper have two goals:

First, we establish existence and uniqueness result for RBDSDE
\begin{eqnarray}
Y_t = \xi+\int_t^T f (s,Y_s,Z_s)ds+\int_t^T g(s,Y_s,Z_s) \overleftarrow{dB}_s+K_T-K_t-\int_t^T Z_sdW_s,\label{1.6}
\end{eqnarray}
when coefficients $f$ and $g$ satisfy "Main assumption" and hence establish a comparison principle. Next, using RBDSDE \eqref{1.6}, our second goal is to derive the
existence of a stochastic viscosity solution of SPDE $OP^{(f,0,g,h,l)}$ and further extend the nonlinear Feynman-Kac formula
in special case where the function $g$ does not depend on $z$. In our point of view, there exist real novelty in this work. Indeed, 
since functions $f$ and $g$ satisfy "Main assumptions", the "penalization method" that is usually used in the reflected BSDE
framework does not work. Consequently it is impossible to adapt the existing method to prove existence of a stochastic viscosity of $OP^{(f,0,g,h,l)}$ by a convergence result of a suitable sequence of non reflected SPDE $OP^{(f_n,0,g,h,l)}$, where for all $n\in\N$, the function $f_n$ defined by 
\begin{eqnarray*}
f_n(t,y, z)=f(t,y,z)-n(y-h(t,X_t ))^{-}	
\end{eqnarray*}
is obtained by penalization method (see \cite{15}, for more detail). For this reason, our method is based on the approximation
of function f by the sequence of Lipschitz function introduced in Lepeltier and San Martin \cite{LS}
The paper is organized as follows. In section 2, we give some notations and preliminaries, which will be useful
in the sequel. In section 3, we establish the existence and uniqueness theorem for a class of reflected BDSDEs with
non-Lipschitz coefficients.

\section{Reflected backward doubly stochastic differential equation with non Lipschitz coefficients}
\subsection{Preliminaries}
For a final time $T>0$, we consider $\{W_{t}; 0\leq t\leq
T \}$ and $\{B_{t}; 0\leq t\leq T \}$ two standard Brownian motion defined respectively on complete probability spaces $(\Omega_1,\mathcal{F}_1,\P_1)$ and $(\Omega_2,
\mathcal{F}_2,\P_2)$ with respectively $\mathbb{R}^{d}$ and $\mathbb{R}^{l}$ values. For any process $\{K_{t},\, t\in[0,T]\}$ defined on the completed probability space $\Omega_i,\mathcal{F}_i,\P_i)$ we set  the following family of $\sigma$-algebra $\mathcal{F}_{s,t}^{K}=\sigma\{K_{r}-K_{s},\, s\leq r \leq t \}$. In particular, $\mathcal{F}_{t}^{K}=\mathcal{F}_{0,t}^{K}$. Next, we consider the product space $(\Omega, \mathcal{F},\P)$, where
$$\Omega=\Omega_{1}\times\Omega_{2},\ \mathcal{F}=\mathcal{F}_{1}\otimes\mathcal{F}_{2}, \ \P=\P_{1}\otimes\P_{2}$$
and $\mathcal{F}_{t}=\mathcal{F}_{t}^{W} \otimes \mathcal{F}_{t,T}^{B}$. We should note that since $\textbf{\textit{F}}^{W}=(\mathcal{F}_{t}^W)_{t\in [0,T]}$ and $\textbf{\textit{F}}_{}^{B}=(\mathcal{F}_{t,T}^B)_{t\in [0,T]}$ are respectively increasing and decreasing filtration, the collection $\textbf{\textit{F}}_{}^{}=(\mathcal{F}_{t})_{t\in[0,T]}$ is neither increasing nor decreasing. Therefore it is not a filtration. Further, all random variables $\zeta$ and $\pi$ defined respectively in $\Omega_1$ and $\Omega_2$ are viewed as random variables on $\Omega$ via the following identification:
$$
\zeta(\omega)=\zeta(\omega_1);\;\;\;  \pi(\omega)=\pi(\omega_2), \;\;  \omega=(\omega_1,\omega_2).
$$
 We need in throughout this paper the following spaces:
 
$\mathcal{M}^{2}(\textbf{\textit{F}},[0,T];\mathbb{R}^{d\times k})$ denote the set of $d\P\otimes dt$ a.e. equal and $(d\times k)$-dimensional jointly measurable
random processes  $\{\varphi_{t}; 0\leq t\leq T \}$  such that
\begin{enumerate}
\item[(i)] $\displaystyle \|\varphi \|_{\mathcal{M}^{2}}^{2}=\mathbb{E}\left(\int_{0}^{T}\| \varphi_{t}
\|^{2} dt\right)< +\infty$

\item[(ii)] $\varphi_{t}$ is $\mathcal{F}_{t}$-measurable, for
a.e. $t \in [0,T].$
\end{enumerate}
We denote by  $\mathcal{S}^{2}(\textbf{\textit{F}},[0,T];\mathbb{R}^k)$
the set of continuous $k$-dimensional random processes such that
\begin{enumerate}
\item[(i)] $\|\varphi \|_{\mathcal{S}^{2}}^{2}=\mathbb{E}(\underset{0\leq t\leq T}
{\sup} \mid \varphi_{t}\mid^{2})< +\infty$

\item[(ii)] $\varphi_{t}$ is $\mathcal{F}_{t}$-measurable, for any
$t \in [0,T].$
\end{enumerate}
We denote also by $\mathcal{A}^{2}(\textbf{\textit{F}},[0,T];\mathbb{R})$ the set of continuous and increasing random processes $\{\varphi_{t}; 0\leq t\leq T \}$ such that
\begin{enumerate}
\item[(i)] $\|\varphi \|_{\mathcal{A}^{2}}^{2}=\mathbb{E}\left(| \varphi_{T}
|^{2}\right)< +\infty$

\item[(ii)] $\varphi_{t}$ is $\mathcal{F}_{t}$-measurable, for
a.e. $t\in[0,T]$.
\end{enumerate}
In the sequel, for simplicity, we shall set $\mathcal{S}^{2}(\R),\,\mathcal{M}^{2}(\R^{d})$ and $\mathcal{A}^{2}(\R^k)$ instead of \newline$\mathcal{S}^{2}(\bf{F},[0,T];\R^k)$, $\mathcal{M}^{2}({\bf F},[0,T],\R^{d})$ and $\mathcal{A}^{2}({\bf F},[0,T],\R^k)$ respectively and set $\mathcal{E}^{2}(0,T)=\mathcal{S}^{2}(\R)\times\mathcal{M}^{2}(\R^{d})\times\mathcal{A}^{2}(\R)$.
 
Let's give now our concept of solution that we will establish in the first part of this paper. 

\begin{definition}[Notion of solution]
\begin{enumerate}
\item [$(i)$] The triplet of processes $(Y,Z,K)$ is called solution of a RBDSDE \eqref{1.6} if it belongs in $\mathcal{E}^{2}(0,T)$ and satisfies satisfy \eqref{1.6} and \eqref{flat}.
\item[$(ii)$] The triplet of processes $(\underline{Y},\underline{Z},\underline{K})$ is said to be a minimal solution of a RBDSDE \eqref{1.6} if it belongs in $\mathcal{E}^{2}(0,T)$, satisfies satisfy \eqref{1.6} and for all any other solution $(Y,Z,K)$ of RBDSDE \eqref{1.6}, we have $\underline{Y}\leq Y$. 
\item [$(iii)$] The triplet of processes $(\overline{Y},\overline{Z},\overline{K})$ is said to be a maximal solution of a RBDSDE \eqref{1.6} if it belongs in $\mathcal{E}^{2}(0,T)$, satisfies satisfy \eqref{1.6} and that for all any other solution $(Y,Z,K)$ of RBDSDE \eqref{1.6}, we have $\overline{Y}\geq Y$. 
\end{enumerate}	
\end{definition}
All the result of the first part of our paper will be done under the following assumptions. The generators $f:\Omega \times [0,T]\times \R\times\R^d\rightarrow \R$ and $g:\Omega \times [0,T]\times \R\times\R^d\rightarrow \R^{\ell}$, the terminal value $\xi$ and the obstacle process $S=(S_t)_{t\geq 0}$ satisfy
\begin{description}
\item[$({\bf H1})$] $\xi$ is a $\mathcal{F}_T$-measurable random variable such that $\E(|\xi|^2)<+\infty$
\item [$({\bf H2})$] $S\in \mathcal{S}^{2}(\R)$ such that $S_T\leq \xi$ 
\item[$({\bf H3})$]  $f(.,y,z)$ and  $g(.,y,z)$ are jointly measurable such that $f (t,0,0)\in \mathcal{M}^2(\R)$ and $g(t,0,0)\in \mathcal{M}^2(\R^{\ell})$. Moreover for
all $(t,y_i,z_i)\in [0,T]]\times\R\times \R^{d},\; i=1,2$ we have:
\begin{eqnarray*}
\left\{
\begin{array}{lll}
|f(t,y_1,z_1)- f(t,y_2,z_2)|^{2}
&\leq & C(|y_1-y_2|^{2}+\|z_1-z_2\|^{2})\\\\
\|g(t,y_1,z_1)- g(t,y_2,z_2)\|^{2}
&\leq & C|y_1-y_2|^{2}+\alpha\|z_1-z_2\|^{2},
\end{array}
\right.
\end{eqnarray*}
where where $C > 0$ and $0<\alpha<1$.
\item[$({\bf H4})$]  $f(.,y,z)$ and  $g(.,y,z)$ are jointly measurable such that $f (t,0,0)\in \mathcal{M}^2(\R)$ and $g(t,0,0)\in \mathcal{M}^2(\R^{\ell})$. Moreover for
all $(t,y_i,z_i)\in [0,T]]\times\R\times \R^{d},\; i=1,2$ we have:
\begin{eqnarray*}
\left\{
\begin{array}{lll}
|f(t,y_1,z_1)- f(t,y_2,z_2)|^{2}
&\leq & \rho(t,|y_1-y_2|^{2})+C\|z_1-z_2\|^{2})\\\\
\|g(t,y_1,z_1)- g(t,y_2,z_2)\|^{2}
&\leq & \rho(t,|y_1-y_2|^{2})+\alpha\|z_1-z_2\|^{2},
\end{array}
\right.
\end{eqnarray*}
 where $C>0$ and $0<\alpha<1$ and $\rho:[0,T]\times\R^{+}\rightarrow\R^{+}$ is a non-random function satisfying "{\bf Condition A}". 
\end{description}
\begin{remark}
\begin{itemize}
\item[(i)] Lipschitz condition on generators $f,\, g$ with respect to the variable $y$ is the special case of $(\bf H3)$. It suffice to choose $\rho(t,u)=Cu$.
\item [(ii)] In addition to the case of Lipschitz, there exist these two following examples $\rho_1$ and $\rho_2$ defined by: for $\delta\in (0,1)$ be sufficiently small,
\begin{eqnarray*}
\rho_1(t,u)=\left\{
\begin{array}{ll}
u\log(u^{-1}),& 0\leq u\leq \delta,\\\\
\delta\log(\delta^{-1})+\kappa_1(\delta)(u-\delta),& u>\delta
\end{array}\right. 
\end{eqnarray*}
and 
\begin{eqnarray*}
\rho_2(t,u)=\left\{
\begin{array}{ll}
u\log(u^{-1})\log(\log(u)),& 0\leq u\leq \delta,\\\\
\delta\log(\delta^{-1})\log(\log(\delta))+\kappa_2(\delta)(u-\delta),& u>\delta,
\end{array}\right. 
\end{eqnarray*}
\end{itemize}
\end{remark}
Let recall some existence and uniqueness results establish previously by Bahali et al. \cite{4} under Lipschitz condition.
\begin{proposition}[Bahlali et al. \cite{4}]\label{P1}
Assume ${\bf(H1)}$-${\bf (H3)}$ hold. Then RBDSDEs \eqref{1.6} has a unique solution.	
\end{proposition}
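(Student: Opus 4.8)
The plan is to prove this by the penalization method, transporting the classical construction for reflected BSDEs to the doubly stochastic framework of Pardoux--Peng \cite{24}, exactly as in Bahlali et al. \cite{4}. First I would, for each $n\in\mathbb{N}$, introduce the non-reflected BDSDE with penalized generator $f_n(t,y,z)=f(t,y,z)+n(y-S_t)^{-}$,
\begin{multline*}
Y_t^n = \xi + \int_t^T f(s,Y_s^n,Z_s^n)\,ds + n\int_t^T (Y_s^n - S_s)^{-}\,ds \\ {}+ \int_t^T g(s,Y_s^n,Z_s^n)\,\overleftarrow{dB}_s - \int_t^T Z_s^n\,dW_s,
\end{multline*}
and set $K_t^n := n\int_0^t (Y_s^n - S_s)^{-}\,ds$. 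Since $f_n$ is still globally Lipschitz in $(y,z)$ and $g$ satisfies $({\bf H3})$ with the same constant $\alpha<1$, the Pardoux--Peng existence and uniqueness theorem for BDSDEs gives a unique pair $(Y^n,Z^n)\in\mathcal{S}^2(\R)\times\mathcal{M}^2(\R^d)$, and hence $K^n\in\mathcal{A}^2(\R)$.

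Next I would establish uniform (in $n$) a priori bounds by applying the It\^o formula to $|Y_t^n|^2$. The backward integral against $B$ produces a term $\int_t^T\|g(s,Y_s^n,Z_s^n)\|^2\,ds$ which, by $({\bf H3})$, is dominated by $C\int_t^T|Y_s^n|^2\,ds+\alpha\int_t^T\|Z_s^n\|^2\,ds$ plus an integrable term; because $\alpha<1$ the $\|Z^n\|^2$ contribution is absorbed by the martingale bracket. Controlling the $K^n$-terms with $({\bf H2})$ (namely $S\in\mathcal{S}^2$, after isolating $K_T^n$ from the equation) and invoking Gronwall's lemma, one obtains
\[
\sup_{n}\Big(\mathbb{E}\sup_{0\le t\le T}|Y_t^n|^2 + \mathbb{E}\int_0^T\|Z_s^n\|^2\,ds + \mathbb{E}|K_T^n|^2\Big)<+\infty.
\]
By the comparison theorem for BDSDEs under $({\bf H3})$, the map $n\mapsto Y^n$ is nondecreasing, so it converges pointwise to some process $Y$; combining this with the uniform estimates, I would pass to the limit and obtain $(Y^n,Z^n,K^n)\to(Y,Z,K)\in\mathcal{E}^2(0,T)$ solving \eqref{1.6}, checking $Y_t\ge S_t$ through $\mathbb{E}\int_0^T\big((Y_s^n-S_s)^{-}\big)^2\,ds\to 0$ and the flatness condition \eqref{flat} from $\int_0^T (Y_s^n-S_s)\,dK_s^n = -\,n\int_0^T\big((Y_s^n-S_s)^{-}\big)^2\,ds\le 0$ together with $\int_0^T(Y_s^n-S_s)\,dK_s^n\to\int_0^T(Y_s-S_s)\,dK_s\ge 0$. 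For uniqueness, given two solutions $(Y^i,Z^i,K^i)$, $i=1,2$, I would apply It\^o to $|Y_t^1-Y_t^2|^2$ and use that the cross term $\int_t^T(Y_s^1-Y_s^2)\,d(K_s^1-K_s^2)$ is nonpositive by \eqref{flat} and $Y^i\ge S$; the Lipschitz bounds in $({\bf H3})$, the condition $\alpha<1$, and Gronwall then force $Y^1=Y^2$, whence $Z^1=Z^2$ and $K^1=K^2$.

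The hard part will be the passage to the limit: establishing strong convergence of $Z^n$ in $\mathcal{M}^2(\R^d)$ and, above all, identifying the flatness relation \eqref{flat} for the limit $(Y,Z,K)$ --- this is the usual delicate point for reflected equations and requires combining a (weak) limit of the measures $dK^n$ with the uniform estimates. A secondary but essential point is the careful bookkeeping of the backward It\^o integral against $B$: it is precisely the strict inequality $\alpha<1$, rather than $\alpha\le 1$, that lets the a priori estimates of the second step close, and the same mechanism reappears in the uniqueness argument.
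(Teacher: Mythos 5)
The paper does not actually prove this proposition: it is quoted verbatim from Bahlali et al.\ \cite{4}, so there is no internal proof to compare against. Your penalization scheme is precisely the classical route followed in that reference (and in El Karoui et al.\ \cite{15} for reflected BSDEs), so as a plan it is the right one, and your treatment of the backward integral via $\alpha<1$, the a priori bounds, and the uniqueness argument through the signed cross term $\int_t^T(Y^1_s-Y^2_s)\,d(K^1_s-K^2_s)\le 0$ are all sound.

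That said, as a proof the attempt stops exactly where the real work begins, and you should be aware that the gap you flag is not a routine verification. First, the strong convergence of $Z^n$ in $\mathcal{M}^2(\R^d)$ cannot be obtained by a naive Cauchy argument: applying It\^o's formula to $|Y^n_t-Y^m_t|^2$ produces the cross term $\int_t^T(Y^n_s-Y^m_s)\,d(K^n_s-K^m_s)$, which for the \emph{penalized} approximations has no sign; the classical remedy is to bound it by $\int_t^T (Y^n_s-S_s)^-\,dK^m_s+\int_t^T (Y^m_s-S_s)^-\,dK^n_s$ and then prove $\E\bigl[\sup_{0\le t\le T}\bigl((Y^n_t-S_t)^-\bigr)^2\bigr]\to 0$, which in turn requires knowing that the monotone limit $Y$ is continuous (via Dini's theorem or an optimal-stopping representation of $Y^n$); none of this is in your sketch. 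Second, the monotonicity $Y^n\le Y^{n+1}$ needs a comparison theorem for (non-reflected) BDSDEs in which the two equations share the same $g$; this is available in the literature but should be invoked explicitly, since the product structure $\mathcal{F}_t=\mathcal{F}^W_t\otimes\mathcal{F}^B_{t,T}$ is not a filtration and the standard BSDE comparison argument does not transfer automatically. Third, your flatness argument uses $\int_0^T(Y^n_s-S_s)\,dK^n_s\to\int_0^T(Y_s-S_s)\,dK_s$, which requires uniform convergence of $Y^n$ (again continuity of $Y$) together with convergence of the measures $dK^n$; this is the same delicate point the present paper handles, in its non-Lipschitz setting, by invoking Saisho \cite{Saisho}. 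So: right strategy, consistent with the cited source, but the three items above are the substance of the proof and remain to be supplied.
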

\begin{proposition}[Aman and Owo \cite{3}]\label{l0a}
Assume that RBDSDEs associated respectively to $(f^1, g, \xi^1, S^1)$ and $(f^2, g, \xi^2, S^2)$ have solutions $(Y^1,Z^1, K^1)$ and $(Y^2,Z^2, K^2)$. Assume moreover that:

\begin{enumerate}
\item[$(i)$] $\xi^1\leq\xi^2$ a.s.,
\item[$(ii)$] $S_t^1\leq S_t^2$ a.s., for all
$t \in [0,T]$
\item[$(iii)$] $f^1$ satisfies ${\bf (H3)}$ such that $f^1(t,Y^2,Z^2)\leq f^2(t,Y^2,Z^2)$ a.s. (resp. $f^2$ satisfies ${\bf (H3)}$ such that $f^1(t,Y^1,Z^1)\leq f^2(t,Y^1,Z^1)$ a.s.).
 \end{enumerate}
 Then, $Y_t^1\leq Y_t^2$ a.s., for all $t\in[0,T]$.
\end{proposition}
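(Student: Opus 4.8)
\medskip
\noindent\emph{Proof proposal.} The plan is to prove the comparison by an It\^o-type energy estimate on the positive part of $U:=Y^1-Y^2$. Setting also $V:=Z^1-Z^2$ and $R:=K^1-K^2$, the two equations yield
\begin{eqnarray*}
U_t=(\xi^1-\xi^2)+\int_t^T\big[f^1(s,Y^1_s,Z^1_s)-f^2(s,Y^2_s,Z^2_s)\big]\,ds+\int_t^T\big[g(s,Y^1_s,Z^1_s)-g(s,Y^2_s,Z^2_s)\big]\,\overleftarrow{dB}_s+R_T-R_t-\int_t^T V_s\,dW_s.
\end{eqnarray*}
First I would apply It\^o's formula (in its backward doubly stochastic form) to $\phi(U_t)$ with $\phi(x)=(x^+)^2$; this is legitimate since $\phi$ is of class $C^1$ and $\phi''(x)=2\mathbf{1}_{\{x>0\}}$ is bounded, so the extended (smooth--approximation) It\^o formula applies and no local--time term appears. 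Recall that in this setting the $\overleftarrow{dB}$-integral contributes the correction $+\int_t^T\mathbf{1}_{\{U_s>0\}}\|g(s,Y^1_s,Z^1_s)-g(s,Y^2_s,Z^2_s)\|^2\,ds$ while the $W$-integral contributes $-\int_t^T\mathbf{1}_{\{U_s>0\}}\|V_s\|^2\,ds$. Along a localizing sequence of stopping times and after taking expectations, the $W$- and $\overleftarrow{dB}$-stochastic integrals have zero expectation, leaving
\begin{eqnarray*}
\E|U_t^+|^2+\E\int_t^T\mathbf{1}_{\{U_s>0\}}\|V_s\|^2\,ds
&=&\E|(\xi^1-\xi^2)^+|^2+2\,\E\int_t^T U_s^+\big[f^1(s,Y^1_s,Z^1_s)-f^2(s,Y^2_s,Z^2_s)\big]\,ds\\
&&{}+\E\int_t^T\mathbf{1}_{\{U_s>0\}}\|g(s,Y^1_s,Z^1_s)-g(s,Y^2_s,Z^2_s)\|^2\,ds+2\,\E\int_t^T U_s^+\,dR_s.
\end{eqnarray*}

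Next I would control the terms on the right-hand side. The terminal term vanishes by $(i)$, since $\xi^1\leq\xi^2$ gives $(\xi^1-\xi^2)^+=0$. For the reflection term, on the set $\{U_s>0\}$ one has $Y^1_s>Y^2_s\geq S^2_s\geq S^1_s$ by $(ii)$, hence $Y^1_s>S^1_s$, so the minimality condition \eqref{flat} for $K^1$ forces $\int_t^T U_s^+\,dK^1_s=0$, while $\int_t^T U_s^+\,dK^2_s\geq 0$; therefore $\E\int_t^T U_s^+\,dR_s\leq 0$. For the generator term I would use $(iii)$: in the version where $f^2$ satisfies ${\bf (H3)}$ and $f^1(s,Y^1_s,Z^1_s)\leq f^2(s,Y^1_s,Z^1_s)$, I would split
\begin{eqnarray*}
f^1(s,Y^1_s,Z^1_s)-f^2(s,Y^2_s,Z^2_s)=\underbrace{\big[f^1(s,Y^1_s,Z^1_s)-f^2(s,Y^1_s,Z^1_s)\big]}_{\leq\,0}+\big[f^2(s,Y^1_s,Z^1_s)-f^2(s,Y^2_s,Z^2_s)\big],
\end{eqnarray*}
so that, by the Lipschitz bound on $f^2$ and Young's inequality, for every $\varepsilon>0$
\begin{eqnarray*}
2\,U_s^+\big[f^1(s,Y^1_s,Z^1_s)-f^2(s,Y^2_s,Z^2_s)\big]\leq 2\sqrt{C}\,U_s^+\big(|U_s^+|+\mathbf{1}_{\{U_s>0\}}\|V_s\|\big)\leq\Big(2\sqrt{C}+\frac{\sqrt{C}}{\varepsilon}\Big)|U_s^+|^2+\varepsilon\sqrt{C}\,\mathbf{1}_{\{U_s>0\}}\|V_s\|^2.
\end{eqnarray*}
The ``resp.'' variant is identical, splitting this time through $f^1(s,Y^2_s,Z^2_s)$ and using the Lipschitz bound on $f^1$ together with $f^1(s,Y^2_s,Z^2_s)\leq f^2(s,Y^2_s,Z^2_s)$.

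Finally I would combine everything. By ${\bf (H3)}$, $\|g(s,Y^1_s,Z^1_s)-g(s,Y^2_s,Z^2_s)\|^2\leq C|U_s^+|^2+\alpha\|V_s\|^2$ on $\{U_s>0\}$; inserting this and the generator bound into the energy identity, the terms carrying $\|V_s\|^2$ combine into $(1-\alpha-\varepsilon\sqrt{C})\,\E\int_t^T\mathbf{1}_{\{U_s>0\}}\|V_s\|^2\,ds$ on the left-hand side. Since $\alpha<1$, I would fix $\varepsilon>0$ so small that $1-\alpha-\varepsilon\sqrt{C}>0$; this term is then nonnegative and may be discarded, leaving
\begin{eqnarray*}
\E|U_t^+|^2\leq c\int_t^T\E|U_s^+|^2\,ds,\qquad t\in[0,T],
\end{eqnarray*}
for a constant $c$ depending only on $C$ and $\alpha$. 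Gronwall's lemma then gives $\E|U_t^+|^2=0$ for every $t$, i.e.\ $Y^1_t\leq Y^2_t$ a.s., and the pathwise continuity of $Y^1-Y^2$ upgrades this to an a.s.\ inequality valid simultaneously for all $t\in[0,T]$.

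The step I expect to be the main obstacle is the bookkeeping around the reflection term and the backward integral: one has to combine the obstacle ordering $(ii)$ with the Skorokhod/minimality condition \eqref{flat} exactly as above to obtain the correct sign of $\E\int_t^T U_s^+\,dR_s$, and one must arrange the localization so that, in the doubly stochastic framework, both the forward $W$-integral and the backward $\overleftarrow{dB}$-integral are genuine (localized) martingales with zero expectation; one should also keep in mind that $\phi(x)=(x^+)^2$ is only $C^{1,1}$, so it is the extended form of It\^o's formula that is being invoked. The remaining estimates form a routine Lipschitz-plus-Gronwall argument.
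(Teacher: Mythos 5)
Your argument is correct and is essentially the same scheme the paper itself uses: the paper quotes this proposition from Aman--Owo \cite{3} without proof, but its own comparison theorem in Subsection 2.3 is proved exactly as you do --- It\^o's formula applied to $|(Y^1-Y^2)^+|^2$, the sign of the reflection term obtained from $Y^1_s>Y^2_s\geq S^2_s\geq S^1_s$ together with the Skorokhod condition \eqref{flat}, the splitting of the generator difference via assumption $(iii)$, and absorption of the $\|Z^1-Z^2\|^2$ terms using $\alpha<1$. The only difference is the concluding step, where under the Lipschitz hypothesis ${\bf (H3)}$ your plain Gronwall argument suffices, whereas the paper's non-Lipschitz version must finish with the ODE-comparison (Bihari-type) argument for $\rho$.
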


\subsection{The main results}

Our objective, in this section is to derive an existence and uniqueness result for reflected BDSDEs with data $(\xi, f, g, S)$ where the generators are non-Lipschitz. More precisely we assume assumptions ${\bf(H1)}$, ${\bf(H2)}$ and ${\bf(H4)}$. For this purpose, let consider the sequence of processes $\left(
Y^n,Z^n,K^n\right)_{n\geq 1}$ defined recursively as follows. For $t\in[0,T],\; Y^0(t)=Z^0(t)=0$, and for all $\in \N^*$,
\begin{eqnarray}  \label{eq3}
\left\{
\begin{array}{ll}
&\displaystyle (i)\,Y_t^n=\xi+\int_{t}^{T}f(s,Y_s^{n-1},Z_s^{n})ds
\displaystyle+\int_{t}^{T}g(s,Y_s^{n-1},Z_s^n)\overleftarrow{dB}_{s}+\int_{t}^{T}dK_{s}^{n}-\int_{t}^{T}Z_s^ndW_{s},\\\\
&\displaystyle (ii)\,Y_{t}^n\geq S_{t},\\\\
&\displaystyle (iii)\,\int_0^T(Y_{t}^n-S_{t})dK_{t}^n=0  \hbox{.}
\end{array}
\right.
\end{eqnarray}
For each $n\geq 1$ and fixed $Y^{n-1}$, it not difficult to show that  the data of RBDSDEs \eqref{eq3} satisfy assumptions ${\bf(H1)}$, ${\bf(H2)}$ and ${\bf(H3)}$. Therefore, in view of Proposition \ref{P1}, RBSDEs \eqref{eq3} has a unique solution $ \left(Y^n,Z^n,K^n\right)_{n\geq 1}\in\mathcal{E}^{2}([0,T])$.

Our next aim is to prove that the sequence $\left(
Y^n,Z^n,K^n\right)_{n\geq 1}$ converges in $\mathcal{E}^{2}([0,T])$ to a process $(Y,Z,K)$ which is the unique solution of RBDSDEs \eqref{1.6}. We have this result.
\begin{theorem}\label{TE}
Assume that ${\bf(H1)}$, ${\bf(H2)}$ and ${\bf(H4)}$ hold.
Then the RBDSDEs \eqref{1.6} has a unique solution $(Y,Z,K) \in \mathcal{E}^{2}([0,T])$.
\end{theorem}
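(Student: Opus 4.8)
The plan is to prove that the Picard-type sequence $(Y^n,Z^n,K^n)_{n\ge1}$ of \eqref{eq3} (which is well defined by Proposition~\ref{P1}) is Cauchy in $\mathcal E^2([0,T])$, to identify its limit as a solution of \eqref{1.6}, and to obtain uniqueness from the same estimate; the driving tool throughout is It\^o's formula combined with a Bihari-type argument governed by {\bf Condition A}. First I would establish the uniform a priori bound $\sup_{n}\bigl(\|Y^n\|^2_{\mathcal S^2}+\|Z^n\|^2_{\mathcal M^2}+\E|K^n_T|^2\bigr)<\infty$ by the standard reflected-BDSDE estimate applied to $e^{\beta s}|Y^n_s|^2$: the backward integral contributes $\E\int_t^Te^{\beta s}\|g(s,Y^{n-1}_s,Z^n_s)\|^2ds$ on the right-hand side, absorbed by the $\|Z^n\|^2$-term thanks to $\alpha<1$; the reflection term is handled by replacing $Y^n$ with $S$ under $dK^n$ via \eqref{flat}; and the non-Lipschitz part is dominated by the at most linear growth $\rho(t,u)\le\rho(t,1)(1+u)$ (from concavity and $\rho(t,0)=0$) together with $\int_0^T\rho(t,1)dt<\infty$ from {\bf Condition A}$(i)$, closing the bound by Gronwall, if necessary on a fine enough partition of $[0,T]$. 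Next, writing $b_{n,m}(t)=\E|Y^n_t-Y^m_t|^2$, I would apply It\^o's formula to $|Y^{n+1}_t-Y^{m+1}_t|^2$: the cross term $\int_t^T(Y^{n+1}_s-Y^{m+1}_s)\,d(K^{n+1}_s-K^{m+1}_s)$ is $\le0$ by \eqref{flat} (both components dominate $S$ and push only on $\{Y=S\}$), the backward integral yields a $\|g(s,Y^{n}_s,Z^{n+1}_s)-g(s,Y^{m}_s,Z^{m+1}_s)\|^2$ term, and $({\bf H4})$ bounds the generator increments by $\rho(s,|Y^{n}_s-Y^{m}_s|^2)+C\|Z^{n+1}_s-Z^{m+1}_s\|^2$. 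Choosing the Young parameters so that the coefficient of $\E\int_t^T\|Z^{n+1}_s-Z^{m+1}_s\|^2ds$ stays below $1$ and then using Gronwall and Jensen's inequality (legitimate since $\rho(s,\cdot)$ is concave), one obtains a constant $L=L(C,\alpha,T)$ with
\begin{equation*}
b_{n+1,m+1}(t)\ \le\ L\int_t^T\rho\bigl(s,b_{n,m}(s)\bigr)\,ds,\qquad t\in[0,T],
\end{equation*}
and, from the same computation, $\E\int_0^T\|Z^{n+1}_s-Z^{m+1}_s\|^2ds\le L\int_0^T\rho(s,b_{n,m}(s))ds$.

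The crux is then the passage to the limit. Let $\beta(t)=\limsup_{n,m\to\infty}b_{n,m}(t)$, which is bounded and measurable by the a priori bound. Taking $\limsup$ in the displayed inequality, with reverse Fatou (the integrands are dominated by $\rho(\cdot,\bar M)\in L^1([0,T])$, where $\bar M$ is the a priori bound) and the identity $\limsup_{n,m}\rho(s,b_{n,m}(s))=\rho(s,\beta(s))$ (from continuity and monotonicity of $\rho(s,\cdot)$), one gets $\beta(t)\le L\int_t^T\rho(s,\beta(s))ds$. Setting $W(t)=L\int_t^T\rho(s,\beta(s))ds$ one has $W(T)=0$, $0\le\beta\le W$ and $W'(t)=-L\rho(t,\beta(t))\ge-L\rho(t,W(t))$; comparing $W$ with the (maximal) solution $u_\varepsilon$ of $u'=-L\rho(t,u)$, $u(T)=\varepsilon$, and letting $\varepsilon\downarrow0$, {\bf Condition A}$(ii)$ forces $W\equiv0$, hence $\beta\equiv0$. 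Consequently $b_{n,m}(t)\to0$ for every $t$, and since $\sup_t b_{n,m}(t)\le L\int_0^T\rho(s,b_{n-1,m-1}(s))ds\to0$ by dominated convergence, $(Y^n)$ is Cauchy in $L^2(\Omega)$ uniformly in $t$; a Burkholder--Davis--Gundy estimate on the martingale parts in the It\^o expansion of $\sup_t|Y^n_t-Y^m_t|^2$ upgrades this to convergence in $\mathcal S^2$, the $Z$-estimate above gives convergence of $(Z^n)$ in $\mathcal M^2$, and then $(K^n)$ converges in $\mathcal A^2$ since $K^n_t$ is expressed through the remaining terms of \eqref{eq3}$(i)$. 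Denote the limit by $(Y,Z,K)$.

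It remains to check that $(Y,Z,K)$ solves \eqref{1.6} and to prove uniqueness. Passing to the limit in \eqref{eq3} is legitimate because $\E\int_0^T|f(s,Y^{n-1}_s,Z^n_s)-f(s,Y_s,Z_s)|^2ds\le\E\int_0^T\bigl[\rho(s,|Y^{n-1}_s-Y_s|^2)+C\|Z^n_s-Z_s\|^2\bigr]ds\to0$ (dominated convergence for the $\rho$-part), and likewise for $g$; from $Y^n\ge S$ we get $Y\ge S$, and the flatness survives the limit via
\begin{equation*}
\int_0^T(Y_t-S_t)\,dK_t=\int_0^T(Y_t-Y^n_t)\,dK_t+\int_0^T(Y^n_t-S_t)\,d(K_t-K^n_t)\longrightarrow0,
\end{equation*}
using the uniform convergence of $Y^n$ and $K^n$; so $(Y,Z,K)\in\mathcal E^2(0,T)$ is a solution. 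For uniqueness, if $(Y,Z,K)$ and $(\bar Y,\bar Z,\bar K)$ are two solutions, It\^o's formula on $|Y_t-\bar Y_t|^2$ — with the $d(K-\bar K)$ cross term $\le0$ by \eqref{flat} for both and the generator increments controlled by $({\bf H4})$ — gives $\E|Y_t-\bar Y_t|^2\le L\int_t^T\rho(s,\E|Y_s-\bar Y_s|^2)ds$, so $Y=\bar Y$ by {\bf Condition A}, whence $\bar Z=Z$ (from the $\|Z-\bar Z\|^2$-term) and $\bar K=K$ (from the equation).

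I expect the main obstacle to be the limit argument of the second paragraph: unlike the Lipschitz case one cannot iterate a contraction, since $L\int_0^T\rho(s,\cdot)ds$ need not be contracting near $0$, and the non-Lipschitz feature is overcome only by passing to the $\limsup$ in $n,m$ to obtain a genuine pointwise-in-$t$ integral inequality to which {\bf Condition A} applies; the delicate points there are justifying the interchange of $\limsup$ with the time integral and the backward ODE comparison. A secondary technical nuisance is keeping the a priori estimate uniform in $n$ despite the merely $L^1$-in-time growth weight $\rho(\cdot,1)$, which may force an interval-by-interval argument.
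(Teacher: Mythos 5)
Your proposal is correct and follows the same skeleton as the paper -- the Picard scheme \eqref{eq3}, an It\^o estimate giving $b_{n,m}(t)\le L\int_t^T\rho(s,b_{n-1,m-1}(s))\,ds$ (this is the paper's Lemma \ref{l2}), passage to the limit including the flatness condition \eqref{flat}, and uniqueness via the ODE comparison under {\bf Condition A} -- but it takes a genuinely different route at the convergence step. The paper, following \cite{MNJMO2}, proves the uniform bound $\E|Y^n_t|^2\le M_1$ only on a terminal subinterval $[T_1,T]$ (Lemma \ref{l3}), dominates the differences there by the auxiliary non-increasing sequence $\phi_n\downarrow 0$, and then constructs the solution by stitching finitely many subintervals $[T_p,T_{p-1}]$, the finiteness of the number of steps and the uniform convergence $\phi_n\to0$ being borrowed from \cite{MNJMO2}; the flatness identity is passed to the limit by citing \cite{Saisho}. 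You instead obtain a global-in-time a priori bound by exploiting the linear growth $\rho(t,u)\le\rho(t,1)(1+u)$, which follows from concavity and $\rho(t,0)=0$, together with {\bf Condition A}$(i)$: the resulting recursion $a_n(t)\le A+c\int_t^T\rho(s,1)\,a_{n-1}(s)\,ds$ iterates to a bound uniform in $n$ and $t$, so your ``fine partition'' hedge is in fact unnecessary. You then conclude with a single $\limsup$/reverse-Fatou/Bihari argument on all of $[0,T]$, which eliminates both the auxiliary sequence $\phi_n$ and the interval-stitching, and you replace the appeal to \cite{Saisho} by a direct decomposition of $\int_0^T(Y_t-S_t)\,dK_t$ (correct as written, since the omitted term $\int_0^T(Y^n_t-S_t)\,dK^n_t$ vanishes by \eqref{eq3}$(iii)$; the second term still needs the weak-convergence-of-measures argument that uniform convergence of $K^n$ provides, i.e.\ essentially Saisho's lemma). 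What your route buys is a shorter, self-contained proof avoiding the delicate claim that $T_p=0$ after finitely many steps; what it costs is that more weight rests on the comparison step: {\bf Condition A}$(ii)$ is phrased as uniqueness of the zero solution of $u'=-M\rho(t,u)$, $u(T)=0$, so the inequality $\beta(t)\le L\int_t^T\rho(s,\beta(s))\,ds$ must be converted into $\beta\le r$ with $r$ the maximal (Carath\'eodory) solution of that terminal-value problem -- exactly the comparison the paper itself invokes in its uniqueness argument -- and one should record that $\beta$ is bounded and measurable so that $W(t)=L\int_t^T\rho(s,\beta(s))\,ds$ is absolutely continuous; with those two points made explicit, your argument is complete.
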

In order to provide the proof of Theorem \ref{te}, we need these two lemmas.
\begin{lemma}\label{l2}
Assume that ${\bf(H1)}$, ${\bf(H2)}$ and ${\bf(H4)}$ hold.
Then for all $0\leq t\leq T$, $n, m\geq 1$, we have
\begin{eqnarray*}
{\mathbb{E}}\left| Y_{t}^{n+m}-Y_{t}^{n}\right| ^{2} \leq e^{\frac{CT}{
1-\alpha}}\left(\frac{1-\alpha}{C}+1\right)\int_{t}^{T }\rho(s,{\mathbb{E}}
\left|Y_{s}^{n+m-1}-Y_{s}^{n-1}\right| ^{2})ds.
\end{eqnarray*}
\end{lemma}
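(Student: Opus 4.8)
The plan is to apply Itô's formula to the weighted process $e^{\beta s}|Y_s^{n+m}-Y_s^{n}|^{2}$ for a carefully chosen $\beta>0$, use the minimality (Skorokhod) conditions satisfied by $K^{n+m}$ and $K^{n}$ to discard the reflection terms, and then invoke the non-Lipschitz bounds in ${\bf(H4)}$ together with Jensen's inequality. Write $\Delta Y=Y^{n+m}-Y^{n}$, $\Delta Z=Z^{n+m}-Z^{n}$, $\Delta K=K^{n+m}-K^{n}$, $\widehat Y_s=Y_s^{n+m-1}-Y_s^{n-1}$, and set $\Delta f_s=f(s,Y_s^{n+m-1},Z_s^{n+m})-f(s,Y_s^{n-1},Z_s^{n})$, $\Delta g_s=g(s,Y_s^{n+m-1},Z_s^{n+m})-g(s,Y_s^{n-1},Z_s^{n})$. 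By Proposition \ref{P1} each triple $(Y^{k},Z^{k},K^{k})$ lies in $\mathcal E^{2}([0,T])$, so all integrals below are finite and $s\mapsto\E|\widehat Y_s|^{2}$ is bounded (hence, by Condition A$(i)$ and monotonicity of $\rho(s,\cdot)$, integrable against $\rho$).

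First I would recall the Itô formula for backward doubly stochastic integrals applied to $\Delta Y$, which solves a BDSDE driven by $\Delta f_s$, $\Delta g_s$ and the bounded-variation term $\Delta K$. After taking expectation (with the usual localization making the $W$- and $B$-martingale parts vanish) and using $\Delta Y_T=0$, one obtains
\begin{align*}
&e^{\beta t}\E|\Delta Y_t|^{2}+\beta\,\E\int_t^T e^{\beta s}|\Delta Y_s|^{2}\,ds+\E\int_t^T e^{\beta s}\|\Delta Z_s\|^{2}\,ds \\
&\qquad =2\,\E\int_t^T e^{\beta s}\Delta Y_s\,\Delta f_s\,ds+\E\int_t^T e^{\beta s}\|\Delta g_s\|^{2}\,ds+2\,\E\int_t^T e^{\beta s}\Delta Y_s\,d(\Delta K_s),
\end{align*}
where the $+\|\Delta g_s\|^{2}$ term is the extra contribution of the backward integral. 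The reflection term is nonpositive: by \eqref{flat} at both levels, $Y_s^{n+m}=S_s$ $dK^{n+m}$-a.e. and $Y_s^{n}=S_s$ $dK^{n}$-a.e., so $\Delta Y_s\,dK_s^{n+m}=(S_s-Y_s^{n})\,dK_s^{n+m}\le 0$ and $-\Delta Y_s\,dK_s^{n}=(S_s-Y_s^{n+m})\,dK_s^{n}\le 0$ since $Y^{n+m},Y^{n}\ge S$.

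Next I would bound the driver terms. Young's inequality gives $2\Delta Y_s\Delta f_s\le a|\Delta Y_s|^{2}+a^{-1}|\Delta f_s|^{2}$, and ${\bf(H4)}$ gives $|\Delta f_s|^{2}\le\rho(s,|\widehat Y_s|^{2})+C\|\Delta Z_s\|^{2}$ and $\|\Delta g_s\|^{2}\le\rho(s,|\widehat Y_s|^{2})+\alpha\|\Delta Z_s\|^{2}$. The key is the choice $a=\beta=C/(1-\alpha)$: then the coefficient of $\|\Delta Z_s\|^{2}$ on the right is $Ca^{-1}+\alpha=(1-\alpha)+\alpha=1$, cancelling $\E\int_t^T e^{\beta s}\|\Delta Z_s\|^{2}\,ds$, while the $a|\Delta Y_s|^{2}$ term cancels $\beta\,\E\int_t^T e^{\beta s}|\Delta Y_s|^{2}\,ds$. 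What remains is
\begin{equation*}
e^{\beta t}\E|\Delta Y_t|^{2}\le\left(\frac1a+1\right)\E\int_t^T e^{\beta s}\rho\big(s,|\widehat Y_s|^{2}\big)\,ds=\left(\frac{1-\alpha}{C}+1\right)\E\int_t^T e^{\beta s}\rho\big(s,|\widehat Y_s|^{2}\big)\,ds.
\end{equation*}
Finally, since $\rho(s,\cdot)$ is concave with $\rho(s,0)=0$, Jensen's inequality yields $\E\rho(s,|\widehat Y_s|^{2})\le\rho(s,\E|\widehat Y_s|^{2})$; bounding $e^{\beta s}\le e^{\beta T}=e^{CT/(1-\alpha)}$ and using $e^{\beta t}\ge 1$ gives exactly the asserted inequality.

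The delicate points are essentially bookkeeping: getting the backward-doubly-SDE Itô formula right (the $+\|\Delta g_s\|^{2}$ term, and the localization needed before taking expectations), verifying from \eqref{flat} that the reflection cross-term is $\le 0$, and making the single coupled choice $a=\beta=C/(1-\alpha)$ that simultaneously annihilates the $\|\Delta Z\|^{2}$ and $|\Delta Y|^{2}$ contributions and reproduces the precise constants $e^{CT/(1-\alpha)}$ and $\tfrac{1-\alpha}{C}+1$. (Alternatively one can work without the weight and close with a backward Gronwall lemma applied to the non-increasing forcing term $\int_t^T\rho(s,\E|\widehat Y_s|^{2})\,ds$, which produces the same bound.)
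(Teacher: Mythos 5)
Your argument is correct and follows essentially the same route as the paper: apply It\^o's formula to $|Y^{n+m}-Y^{n}|^{2}$, drop the reflection cross-term using the Skorokhod conditions, invoke $({\bf H4})$ with Young's inequality to cancel the $\|\Delta Z\|^{2}$ contribution, and close with Jensen's inequality for the concave $\rho$ plus a Gronwall-type step; the paper merely abbreviates this last part by referring to N'zi and Owo, and your exponential-weight choice $\beta=C/(1-\alpha)$ reproduces exactly the same constant $e^{CT/(1-\alpha)}\bigl(\tfrac{1-\alpha}{C}+1\bigr)$.
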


\begin{proof} Using It\^o's formula, and the fact that $\displaystyle\int_{t}^{T }\left(
Y_{s}^{n+m}-Y_{s}^{n}\right)\left(dK_{s}^{n+m}-dK_{s}^{n}\right)\leq0$, we have
\begin{eqnarray*}
&&{\E}\left|Y_{t}^{n+m}-Y_{t}^{n}\right| ^{2}+{\E}\int_{t}^{T }\left|Z_{t}^{n+m}-Z_{t}^{n}\right| ^{2}ds \\
&\leq&2{\E}\int_{t}^{T }\left(
Y_{s}^{n+m}-Y_{s}^{n}\right)\left(f(s,Y_{s}^{n+m-1},Z_{s}^{n+m})-f(s,Y_{s}^{n-1},Z_{s}^{n})\right)
ds\\&&{\E}\int_{t}^{T }\left|
g(s,Y_{s}^{n+m-1},Z_{s}^{n+m})-g(s,Y_{s}^{n-1},Z_{s}^{n})\right|
^{2}ds .
\end{eqnarray*}
The rest of the proof follows as the one appear in N'zi and Owo \cite{MNJMO2}
\end{proof}

\begin{lemma}
\label{l3} Assume that ${\bf(H1)}$, ${\bf(H2)}$ and ${\bf(H4)}$ hold. Then, there exists $T_1 \in [0,T[$ and 
$M_1\geq 0$ such that for all $t\in[T_1, T]$ and $n\geq 1$, we have
${\mathbb{E}}\left|Y_{t}^{n}\right|^{2} \leq M_1$.
\end{lemma}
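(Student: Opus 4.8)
The plan is to prove the bound by induction on $n$, coupling the standard energy estimate for reflected BDSDEs with the observation that, by {\bf Condition A}$(i)$, the tail $\int_{T_1}^{T}\rho(s,u)\,ds$ can be made arbitrarily small once $T_1$ is chosen close enough to $T$. Write $u_n(t):=\E|Y^n_t|^{2}$; since $Y^n\in\mathcal{S}^{2}(\R)$, $u_n$ is bounded and continuous on $[0,T]$, so the backward Gronwall lemma will be applicable. Applying It\^o's formula to $|Y^n_t|^{2}$ along \eqref{eq3}$(i)$, taking expectations, and using the flatness relation \eqref{eq3}$(iii)$, which gives $\int_t^T Y^n_s\,dK^n_s=\int_t^T S_s\,dK^n_s$, one obtains
\[
u_n(t)+\E\int_t^{T}\|Z^n_s\|^{2}\,ds=\E|\xi|^{2}+2\E\int_t^{T}Y^n_s\,f(s,Y^{n-1}_s,Z^n_s)\,ds+\E\int_t^{T}\|g(s,Y^{n-1}_s,Z^n_s)\|^{2}\,ds+2\E\int_t^{T}S_s\,dK^n_s .
\]

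Next I would estimate the generator terms by means of ${\bf(H4)}$. Splitting $f$ and $g$ around $(s,0,0)$ yields $|f(s,Y^{n-1}_s,Z^n_s)|\le|f(s,0,0)|+\sqrt{\rho(s,|Y^{n-1}_s|^{2})}+\sqrt{C}\,\|Z^n_s\|$ and, for any $\eta>0$, $\|g(s,Y^{n-1}_s,Z^n_s)\|^{2}\le(1+\eta)\big(\rho(s,|Y^{n-1}_s|^{2})+\alpha\|Z^n_s\|^{2}\big)+(1+\eta^{-1})\|g(s,0,0)\|^{2}$. Using Young's inequality with a large parameter $\lambda$ on the cross term $2\sqrt{C}\,|Y^n_s|\,\|Z^n_s\|$, and Jensen's inequality (concavity of $\rho(s,\cdot)$) to replace $\E\,\rho(s,|Y^{n-1}_s|^{2})$ by $\rho(s,u_{n-1}(s))$, the coefficient of $\E\int_t^{T}\|Z^n_s\|^{2}\,ds$ generated on the right is $\lambda^{-1}+(1+\eta)\alpha$. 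Since $\alpha<1$, one can fix $\eta$ and then $\lambda$ so that $\lambda^{-1}+(1+\eta)\alpha<1$, and the $Z^n$-terms coming from $f$ and $g$ are absorbed on the left.

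The delicate step is the reflection term. I would first bound, for a small $\mu>0$,
\[
2\E\int_t^{T}S_s\,dK^n_s\le \mu^{-1}\,\E\Big[\sup_{0\le s\le T}|S_s|^{2}\Big]+\mu\,\E\big(K^n_T-K^n_t\big)^{2},
\]
and then isolate $K^n_T-K^n_t$ from \eqref{eq3}$(i)$ and use $(\sum_{i=1}^{5}a_i)^{2}\le5\sum_{i}a_i^{2}$ together with the It\^o isometries for $\int g\,\overleftarrow{dB}$ and $\int Z^n\,dW$, so as to bound $\E(K^n_T-K^n_t)^{2}$ by $5\,u_n(t)$ plus a constant multiple of $\E\int_t^{T}\|Z^n_s\|^{2}\,ds$, a constant multiple of $\int_t^{T}\rho(s,u_{n-1}(s))\,ds$, and fixed finite quantities (involving $\E|\xi|^{2}$, $\|f(\cdot,0,0)\|_{\mathcal{M}^{2}}$, $\|g(\cdot,0,0)\|_{\mathcal{M}^{2}}$, $T$ and $C$). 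In this way the right-hand side recreates $u_n(t)$ and the $Z^n$-energy, but with an overall factor $\mu$; choosing $\mu$ small (after $\eta,\lambda$ are fixed) re-absorbs both. Collecting everything, one reaches
\[
u_n(t)\le \bar A+\bar c_1\int_t^{T}u_n(s)\,ds+\bar c_2\int_t^{T}\rho(s,u_{n-1}(s))\,ds,\qquad t\in[0,T],
\]
with finite constants $\bar A,\bar c_1,\bar c_2$ depending only on $C,\alpha,T,\E|\xi|^{2},\E[\sup_{0\le s\le T}|S_s|^{2}],\|f(\cdot,0,0)\|_{\mathcal{M}^{2}}$ and $\|g(\cdot,0,0)\|_{\mathcal{M}^{2}}$ — in particular independent of $n$.

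It remains to close the induction. We claim that, with $M_1:=2\bar A\,e^{\bar c_1 T}$ and a suitable $T_1\in[0,T[$, one has $u_n(t)\le M_1$ for all $t\in[T_1,T]$ and all $n\ge0$. By {\bf Condition A}$(i)$ applied with $u=M_1$ we have $\int_{T_1}^{T}\rho(s,M_1)\,ds\to0$ as $T_1\uparrow T$, so fix $T_1$ with $\bar c_2\int_{T_1}^{T}\rho(s,M_1)\,ds\le\bar A$. For $n=0$ the claim is trivial since $Y^0\equiv0$. Assuming $u_{n-1}(t)\le M_1$ on $[T_1,T]$, monotonicity of $\rho(s,\cdot)$ gives $\rho(s,u_{n-1}(s))\le\rho(s,M_1)$ for $s\in[t,T]\subset[T_1,T]$, hence the last inequality becomes $u_n(t)\le 2\bar A+\bar c_1\int_t^{T}u_n(s)\,ds$ on $[T_1,T]$; the backward Gronwall lemma then yields $u_n(t)\le 2\bar A\,e^{\bar c_1(T-t)}\le 2\bar A\,e^{\bar c_1 T}=M_1$ there. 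This proves the lemma with these $T_1$ and $M_1$. I expect the main obstacle to be not any single estimate but the bookkeeping required to handle the reflection term while keeping $\bar A,\bar c_1,\bar c_2$ genuinely uniform in $n$, which forces a definite order for choosing the auxiliary parameters: $\eta,\lambda$ first, then $\mu$, then $M_1$, and finally $T_1$.
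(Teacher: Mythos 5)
Your proposal is correct and takes essentially the same route as the paper: It\^o's formula for $|Y^n_t|^2$, Young/Jensen estimates under $({\bf H4})$, control of the reflection term via $2\E\int_t^T S_s\,dK^n_s$ and the bound on $\E(K^n_T-K^n_t)^2$ extracted from the equation with small absorbing parameters, then Gronwall and an induction in $n$ after choosing $T_1$ close enough to $T$ that the tail $\int_{T_1}^T\rho(s,M_1)\,ds$ is sufficiently small (the paper fixes it by an equality with $\mu_0^1/M$, you by an inequality). The only cosmetic difference is the order of the last two steps (the paper applies Gronwall before the induction on $n$, you apply it after inserting the induction hypothesis), which changes nothing of substance.
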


\begin{proof} Recall again It\^o's formula, we get
\begin{eqnarray*}
&&{\E}\left|Y_{t}^{n}\right| ^{2}+{\E}\int_{t}^{T }\left|Z_{t}^{n}\right| ^{2}ds \\
&=&{\E}\left|\xi\right| ^{2}+2{\E}\int_{t}^{T }\left\langle
Y_{s}^{n},f(s,Y_{s}^{n-1},Z_{s}^{n})\right\rangle
ds+2{\E}\int_{t}^{T }Y_{s}^{n}dK_{s}^{n}+{\E}\int_{t}^{T }\left| g(s,Y_{s}^{n-1},Z_{s}^{n})\right|
^{2}ds .
\end{eqnarray*}
Using ${\bf(H4)}$ and Young's inequality $2ab\leq
\frac{1}{\theta}a^2+\theta b^2$ for any $\theta>0$, we obtain
\begin{eqnarray}\label{f}
2\left\langle
Y_{s}^{n},f(s,Y_{s}^{n-1},Z_{s}^{n})\right\rangle &\leq&
\frac{1}{\theta}\left|Y_{s}^{n}\right|^2+\theta\left|f(s,Y_{s}^{n-1},Z_{s}^{n})\right|^2\nonumber\\
&\leq &\frac{1}{\theta}\left|Y_{s}^{n}\right|^2+
2\theta\rho(s,\left|Y_{s}^{n-1}\right|^2)+2\theta
C\|Z_{s}^{n}\|^2+2\theta\left|f(s,0,0)\right|^2,
\end{eqnarray}
and 
\begin{eqnarray}\label{g}
\left| g(s,Y_{s}^{n-1},Z_{s}^{n})\right|
^{2}\leq(1+\theta)\rho(s,\left|Y_{s}^{n-1}\right|^2)+
(1+\theta)\alpha\|Z_{s}^{n}\|^2+(1+\frac{1}{\theta})\left|g(s,0,0)\right|^2.	
\end{eqnarray}
Using again Young's inequality, we have for any $\beta>0$,
\begin{eqnarray*}
2\E\int_{t}^{T}Y_s^{n}dK_s^{n}
=2\E\int_{t}^{T}S_sdK_s^{n}
\leq\frac{1}{\beta}\E\underset{0\leq t\leq
T}{\sup}|S_s|^2+\beta\E\left(K_T^{n}-K_t^{n}\right)^2.
\end{eqnarray*}
But since
\begin{eqnarray*}
K_T^{n}-K_t^{n}&=&Y_t^{n}-\xi-
\int_{t}^{T}f(s,Y_s^{n-1},Z_s^{n})ds\\
&&-\int_{t}^{T}g(s,Y_s^{n-1},Z_s^{n})\overleftarrow{dB}_{s}+\int_{t}^{T}Z_s^{n}dW_{s},\ \ t \in [0,T],
\end{eqnarray*}
together with \eqref{f} and \eqref{g} lead for any $t\in [0,T]$,
\begin{eqnarray*}
&&\E\left(K_T^{n}-K_t^{n}\right)^2\\ &\leq &5\E\left(\left|Y_t^{n}\right|^2+|\xi|^2+
\left|\int_{t}^{T}f(s,Y_s^{n-1},Z_s^{n})ds\right|^2+\left|\int_{t}^{T}g(s,Y_s^{n-1},Z_s^{n})\overleftarrow{dB}_{s}\right|^2+
\left|\int_{t}^{T}Z_s^{n}dW_{s}\right|^2\right)\\
&\leq & 5\E\left(\left|Y_t^{n}\right|^2+|\xi|^2+
T\int_{t}^{T}\left(2\rho(s,\left|Y_{s}^{n-1}\right|^2)+2C\|Z_{s}^{n}\|^2+2\left|f(s,0,0)\right|^2\right) ds\right)\\
&&5\E\left(\int_{t}^{T}\left((1+\theta)\rho(s,\left|Y_{s}^{n-1}\right|^2)+
(1+\theta)\alpha\|Z_{s}^{n}\|^2+(1+\frac{1}{\theta})\left|g(s,0,0)\right|^2\right)ds+
\int_{t}^{T}\|Z_s^{n}\|^2 ds\right),
\end{eqnarray*}
Therefore,
\begin{eqnarray*}
&&\Big(1-5\beta\Big){\E}\left|Y_{t}^{n}\right| ^{2}+\Big[1-2\theta
C-(1+\theta)\alpha-5\beta(2TC+(1+\theta)\alpha+1)\Big]{\E}\int_{t}^{T
}\left|Z_{s}^{n}\right|^2 ds \\
&\leq&\Big(1+5\beta\Big){\E}\left|\xi\right| ^{2}+\frac{1}{\theta}{\E}\int_{t}^{T
}\left|Y_{s}^{n}\right|^2 ds+\Big[(3\theta+1)+5\beta(1+\theta)+10\beta T\Big]\int_{t}^{T }
\rho(s,{\E}\left|Y_{s}^{n-1}\right|^2) ds\\&&+{\E}\int_{t}^{T
}\Big[(2\theta+10\beta T)\left|f(s,0,0)\right|^2+
(1+\frac{1}{\theta})(1+5\beta)\left|g(s,0,0)\right|^2\Big]ds+\frac{1}{\beta}\E\underset{0\leq t\leq
T}{\sup}|S_s|^2.
\end{eqnarray*}
Choosing $\beta,\theta>0$ such that $\beta<\displaystyle\frac{1-\alpha}{5(2TC+\alpha+1)}$ and $\theta\leq\displaystyle\frac{1-\alpha-5\beta(2TC+\alpha+1)}{2C+\alpha+5\beta\alpha}$, there exists a constant $c=c(\alpha,T,C)>0$ satisfying
\begin{eqnarray*}
{\E}\left|Y_{t}^{n}\right| ^{2}
&\leq&c+c{\E}\left|\xi\right| ^{2}+c{\E}\int_{t}^{T
}\left|Y_{s}^{n}\right|^2 ds+c\int_{t}^{T }
\rho(s,{\E}\left|Y_{s}^{n-1}\right|^2) ds\\&&+c{\E}\int_{t}^{T
}\Big[\left|f(s,0,0)\right|^2+
\left|g(s,0,0)\right|^2\Big]ds+c\E\underset{0\leq t\leq
T}{\sup}|S_s|^2.
\end{eqnarray*}
Hence, it follows from Gronwall's inequality that
\begin{eqnarray}\label{g1}
{\E}\left|Y_{t}^{n}\right| ^{2} &\leq&\mu_t^1+ce^{cT}\int_{t}^{T }
\rho(s,{\E}\left|Y_{s}^{n-1}\right|^2) ds
\end{eqnarray}
where
\begin{eqnarray*}
\mu_t^1=ce^{cT}\left(1+{\E}\left|\xi\right|
^{2}+{\E}\underset{0\leq t\leq
T}{\sup}|S_s|^2+{\E}\int_{t}^{T
}\left[\left|f(s,0,0)\right|^2\!+\!
\left|g(s,0,0)\right|^2\right]ds\right).	
\end{eqnarray*}
Let set $M=\max\left\{ce^{cT},;\left(\frac{1-\alpha}{C}+1\right)
e^{\frac{CT}{1-\alpha}}\right\}$ and $M_1=2\mu_0^1$. Recall $(i)$ of {\bf Condition A}, we have $\int_{0}^{T } \rho(s,M_1) ds<+\infty$ and hence there exists $T_1\in [0,T]$ such that
$\int_{T_1}^{T } \rho(s,M_1) ds=\frac{\mu_0^1}{M}$. If $\int_{0}^{T } \rho(s,M_1) ds=\frac{\mu_0^1}{M}$ then $T_1=0$. But if   $\int_{0}^{T } \rho(s,M_1) ds>\frac{\mu_0^1}{M}$, so for all $t\in [T_1,T]$ it follows from \eqref{g1}, the fact that $\rho(t,.)$ is non-decreasing and the induction method that  
${\E}\left|Y_{t}^{n}\right| ^{2} \leq M_1,$ for all $n\geq 1$.
\end{proof}
Now we are able to give the proof of Theorem \ref{te}.
\begin{proof}[Proof of Theorem \ref{te}]

{\bf Existence}\\ For any $t\in [0,T]$, we consider the sequence of processes $(\phi_n(t))_{n\geq 1}$ defined recursively by
\begin{eqnarray*}
\phi_0(t)=M\int_{t}^{T } \rho(s,M_1)ds \ \ \text{and}\ \
\phi_{n+1}(t)=M\int_{t}^{T } \rho(s,\phi_{n}(s)) ds.	
\end{eqnarray*}
With the same reasons as those given in \cite{MNJMO2}, $(\phi_n(t))_{n\geq 0}$ is a non-increasing sequence and converges uniformly to $0$ for all $t\in [T_1,T]$. Moreover, Lemmas \ref{l2} and \ref{l3} permit us to derive that for all $t\in[T_1,T]$ and $n,\ m \geq 1$
\begin{eqnarray}\label{3i}
{\E}\left|
Y_{t}^{n+m}-Y_{t}^{n}\right| ^{2} \leq\phi_{n-1}(t)\leq M_1.
\end{eqnarray}
On the other hand, It\^o's formula together with the fact that
\begin{eqnarray*}
\int_{t}^{T }\left(
Y_{s}^{n+m}-Y_{s}^{n}\right)\left(dK_{s}^{n+m}-dK_{s}^{n}\right)\leq 0,	
\end{eqnarray*}
assumptions $({\bf H1})$ and  $({\bf H4})$ and Young's inequality
$2ab\leq \frac{1}{\theta}a^2+\theta b^2,\ \theta>0$ lead that for all $t\in [T_1,T]$
\begin{eqnarray*}
&&\left|Y_{t}^{n+m}-Y_{t}^{n}\right|^{2}+(1-\theta
C-\alpha)\int_{t}^{T }\left|Z_{t}^{n+m}-Z_{t}^{n}\right| ^{2}ds\\
&\leq&\frac{1}{\theta}\int_{t}^{T
}\left|Y_{s}^{n+m}-Y_{s}^{n}\right|^2 ds+(\theta+1)\int_{t}^{T
} \rho(s,\left|Y_{s}^{n+m-1}-Y_{s}^{n-1}\right|^2) ds\\
&&+2\int_{t}^{T }\left\langle
Y_{s}^{n+m}-Y_{s}^{n},(g(s,Y_{s}^{n+m-1},Z_{s}^{n+m})-g(s,Y_{s}^{n-1},Z_{s}^{n}))
\overleftarrow{dB_{s}}\right\rangle\\
&&-2\int_{t}^{T }\left\langle
Y_{s}^{n+m}-Y_{s}^{n},(Z_{s}^{n+m}-Z_{s}^{n})
dW_{s}\right\rangle.
\end{eqnarray*}
Furthermore, setting $\theta=\frac{1-\alpha}{2C}$ with no more difficult calculations and \eqref{3i} we obtain
\begin{eqnarray*}
\sup_{T_1\leq t\leq T }\left({\E}\left|Y_{t}^{n+m}-Y_{t}^{n}\right| ^{2}\right)+\frac{1-\alpha}{2}{\E}\int_{T_1}^{T }\left|Z_{t}^{n+m}-Z_{t}^{n}\right| ^{2}ds
&\leq&\left(\frac{T-T_1}{\theta}+\frac{\theta+1}{M}\right)\phi_{n-1}(T_1).
\end{eqnarray*}
from which, we deduce by Burkhölder-Davis-Gundy's inequality that
\begin{eqnarray*}
{\E}\sup_{T_1\leq t\leq T }\left|Y_{t}^{n+m}-Y_{t}^{n}\right| ^{2}+
{\E}\int_{T_1}^{T }\left|Z_{t}^{n+m}-Z_{t}^{n}\right| ^{2}ds
\leq \lambda\phi_{n-1}(T_1),
\end{eqnarray*}
where $\lambda$ is positive constant depending on
$C$, $T_1$, $T$, $\alpha$ and $M$. Since $\phi_{n}(t)\ \rightarrow \ 0$,\ for all $t\in[T_1,T]$, as
$n \ \rightarrow \ \infty$, it follows that $(Y^n,Z^n)$ is a Cauchy
sequence in the Banach space $\mathcal{S}^{2}([T_1,T])\times
\mathcal{M}^{2}([T_1 ,T])$. Therefore it converges to a process $(Y,Z)$ belonging in $\mathcal{S}^{2}([T_1,T])\times
\mathcal{M}^{2}(T_1,T])$. On other words, we have 
\begin{eqnarray*}
{\E}\int_{T_1}^{T}\left|Z_{s}^{n}-Z_{s}^{}\right| ^{2}ds\ \longrightarrow \ 0,\; \; \; as\; \;
n \ \longrightarrow \ \infty	
\end{eqnarray*}
and 
\begin{eqnarray*}\label{}
{\E}\left| Y_{t}^{n}-Y_{t}\right|
^{2}\longrightarrow \ 0,\; \; \; \mbox{as}\; \;
n \ \longrightarrow \ \infty.
\end{eqnarray*}
Next, applying Hölder, BDG's inequalities and $({\bf H4})$ we respectively
\begin{eqnarray}\label{bu1}
&&{\E}\left|\int_{t}^{T }
(f(s,Y_s^n,Z_s^n)-f(s,Y_s,Z_s))ds\right|^2\notag\\&&\leq
(T-T_1)C{\E}\int_{T_1}^{T }\left|Z_{s}^{n}-Z_{s}^{}\right|^2
ds+(T-T_1){\E}\int_{T_1}^{T }
\rho(s,\left|Y_{s}^{n}-Y_{s}^{}\right|^2) ds\notag\\&&\leq
(T-T_1)C{\E}\int_{T_1}^{T }\left|Z_{s}^{n}-Z_{s}^{}\right|^2
ds+\frac{(T-T_1)}{M}\phi_{n}(T_1),
\end{eqnarray}

\begin{eqnarray}\label{bu2}
&&{\E}\underset{T_1\leq t\leq T}\sup\left|\int_{t}^{T }
g(s,Y_s^n,Z_s^n)dB_s-\int_{t}^{T }g(s,Y_s,Z_s)dB_s\right|^2\notag\hspace{3cm}\\&&\leq \alpha{\E}\int_{T_1}^{T
}\left|Z_{s}^{n}-Z_{s}^{}\right|^2 ds+\frac{1}{M}\phi_{n}(T_1).
\end{eqnarray}
and
\begin{eqnarray}\label{bu3}
{\E}\underset{T_1\leq t\leq T}\sup\left|\int_{t}^{T }
Z_s^ndW_s-\int_{t}^{T }Z_sdW_s\right|^2\leq {\E}\int_{T_1}^{T
}\left|Z_{s}^{n}-Z_{s}^{}\right|^2 ds
\end{eqnarray}
Therefore according to above, we have for all
$t\in[T_1,T]$,
\begin{eqnarray*}
\int_{t}^{T}f(s,Y_s^n,Z_s^n)ds \longrightarrow \int_{t}^{T
}f(s,Y_s,Z_s)ds \ \ \text{\ in \ } \mathbb{P}-\text{probability},
\ \ \text{\ as \ } n\rightarrow \infty,	
\end{eqnarray*}
\begin{eqnarray*}
\int_{t}^{T }g(s,Y_s^n,Z_s^n)dB_s \longrightarrow \int_{t}^{T
}g(s,Y_s,Z_s)dB_s \ \ \text{\ in \ }
\mathbb{P}-\text{probability}, \ \ \text{\ as \ } n\rightarrow
\infty,	
\end{eqnarray*}
and
\begin{eqnarray*}
\int_{t}^{T } Z_{s}^{n}dW_s \longrightarrow \int_{t}^{T }
Z_{s}dW_s\ \ \text{\ in \ } \mathbb{P}-\text{probability}, \ \
\text{\ as \ } n\rightarrow \infty.
\end{eqnarray*}
On the other hand in view of \eqref{eq3}, we get also
\begin{eqnarray*}
\E\underset{T_1\leq t\leq T}\sup\left|K_{t}^{m+n}-K_{t}^{n}\right|^2
&\leq&\E|Y_{T_1}^{m+n}-Y_{T_1}^{n}|^2+\E\underset{T_1\leq t\leq
T}\sup\left|Y_{t}^{m+n}-Y_{t}^{n}\right|^2\\
&&+
\E\left|\int_{T_1}^{T}\left(f(s,Y_{s}^{m+n-1},Y_{s}^{m+n})
-f(s,Y_{s}^{n-1},Z_{s}^n)\right)ds\right|^2\\
&&+\E\underset{T_1\leq t\leq T}\sup\left|\int_{T_1}^{t}\left(g(s,Y_{s}^{m+n-1},Y_{s}^{m+n})
-g(s,Y_{s}^{n-1},Z_{s}^n)\right)\overleftarrow{dB}_{s}\right|^2\\
&&+\E\underset{T_1\leq t\leq T}\sup\left|\int_{T_1}^{t}(Z_{s}^{m+n}-Z_{s}^n)dW_{s}\right|^2,
\end{eqnarray*}
which provides according to \eqref{bu1}, \eqref{bu2} and \eqref{bu3}
\begin{eqnarray*}
\E\underset{T_1\leq t\leq T}\sup\left|K_{t}^{m+n}-K_{t}^{n}\right|^2\rightarrow 0,\; \; \; as\; \; n \rightarrow\infty.
\end{eqnarray*}
So, there exists a $\mathcal{F}_t$-measurable process $K$ with value in $\R_+$ such that
\begin{eqnarray*}
\E\underset{T_1\leq t\leq T}\sup\left|K_{t}^{n}-K_{t}\right|^2\longrightarrow 0,\; \; \; as\; \; n \rightarrow\infty.
\end{eqnarray*}
Obviously, $\{K_{t};\ T_1\leq t \leq T\}$ is a non-decreasing and continuous process. Passing to the limit in $(i)$ and $(ii)$ of \eqref{eq3}, we have for any $t\in[T_1,T]$,
\begin{eqnarray*} \label{}
\left\{
\begin{array}{ll}
&\displaystyle (i)\,Y_t=\xi+\int_{t}^{T}f(s,Y_s,Z_s)ds
\displaystyle+\int_{t}^{T}g(s,Y_s,Z_s)\overleftarrow{dB}_{s}+\int_{t}^{T}dK_{s}-\int_{t}^{T}Z_sdW_{s},\\\\
&\displaystyle (ii)\,Y_{t}\geq S_{t}.
\end{array}
\right.
\end{eqnarray*}
It remain to prove \eqref{flat}. For that, it follows from Saisho \cite{Saisho} (see p. 465) that
\begin{eqnarray*}
\int_{0}^T(Y_{s}^n-S_{s})\mathbf{1}_{[T_1,T]}dK_{s}^n\ \rightarrow\int_0^T(Y_{s}-S_{s})\mathbf{1}_{[T_1,T]}dK_{s}\; \; \; \P-a.s.,\; \; \; as \; \; n\rightarrow\infty.
\end{eqnarray*}
Now, according to $(iii)$ of \eqref{eq3}, we obtain
\begin{eqnarray*}
\int_{T_1}^T(Y_{s}-S_{s})dK_{s}=0.
\end{eqnarray*}
Finally, we can deduce that the process $(Y,Z,K)$ is solution of RBDSDE \eqref{1.6} starting at $T_1$ with horizon $T$.
If $T_1=0$, the proof of existence is finished. But if $T_1\neq 0$, we need to prove an existence result the following equation:
\begin{eqnarray}  \label{2e}
\left\{
\begin{array}{ll}
&\displaystyle (i)\,Y_t=\xi+\int_{t}^{T_1}f(s,Y_s,Z_s)ds
\displaystyle+\int_{t}^{T_1}g(s,Y_s,Z_s)\overleftarrow{dB}_{s}+\int_{t}^{T_1}dK_{s}-\int_{t}^{T_1}Z_sdW_{s},\; \; t\in[0,T_1],\\\\
&\displaystyle (ii)\,Y_{t}\geq S_{t},\; \; t\in[0,T_1],\\\\
&\displaystyle (iii)\,\int_{0}^{T_1}(Y_{t}-S_{t})dK_{t}=0  \hbox{.}
\end{array}
\right.
\end{eqnarray}
Repeating  the setup as above, we set for all $t\in [0,T_1],\; Y^{0}(t)=Z^{0}(t)=0$ and for all $n\in \N$, we define recursively the reflected BDSDEs
\begin{eqnarray}  \label{eq3bis}
\left\{
\begin{array}{ll}
&\displaystyle (i)\,Y_t^n=\xi+\int_{t}^{T_1}f(s,Y_s^{n-1},Z_s^{n})ds
\displaystyle+\int_{t}^{T_1}g(s,Y_s^{n-1},Z_s^n)\overleftarrow{dB}_{s}+\int_{t}^{T_1}dK_{s}^{n}-\int_{t}^{T_1}Z_s^ndW_{s},\\\\
&\displaystyle (ii)\,Y_{t}^n\geq S_{t},\\\\
&\displaystyle (iii)\,\int_0^{T_1}(Y_{t}^n-S_{t})dK_{t}^n=0  \hbox{.}
\end{array}
\right.
\end{eqnarray}
The same procedure used  in the proof of Lemmas \ref{l2} and Lemma \ref{l3}, leads for all $t\in[T_1,T]$ and  $n,\ m \geq 1$,
\begin{eqnarray*}
{\E}\left| Y_{t}^{n+m}-Y_{t}^{n}\right| ^{2} \leq
e^{\frac{CT}{1-\alpha}}\left(\frac{1-\alpha}{C}+1\right)\int_{t}^{T_1
}\rho(s,{\E}\left| Y_{s}^{n+m-1}-Y_{s}^{n-1}\right| ^{2})ds,
\end{eqnarray*}
and
\begin{eqnarray*}\label{}
{\E}\left|Y_{t}^{n}\right|^{2} &\leq&\mu_t^2+ce^{cT}\int_{t}^{T_1 }
\rho(s,{\E}\left|Y_{s}^{n-1}\right|^2) ds
\end{eqnarray*}
where
$$\mu_t^2=ce^{cT}\left(1+{\E}\left|Y_{T_1}\right|
^{2}+{\E}\underset{0\leq t\leq
T}{\sup}|S_s|^2+{\E}\int_{t}^{T
}\left[\left|f(s,0,0)\right|^2\!+\!
\left|g(s,0,0)\right|^2\right]ds\right).$$
Letting $M_2=2\mu_2^2$, we can also find $T_2\in[0,T_1[$ such that $\int_{T_2}^{T_1 }\rho(s,M_2) ds= \frac{\mu_0^2}{M}$ and
\begin{eqnarray*}
{\E}\left|Y_{t}^{n}\right| ^{2} &\leq & M_2,\ \ n\geq 1, \ t\in
[T_2,T_1].
\end{eqnarray*}
As above, we prove the existence of solution of RBDSDE \eqref{1.6} starting at $T_2$ with horizon $T_1$. If
$T_2=0$, the proof of the existence is complete. Otherwise, we
repeat the above processes. Thus, we obtain a sequence $\{T_p,\
\mu_t^p,\ M_p,\ \ p\geq 1\}$ defined by
\begin{eqnarray*}
&&0\leq T_p< T_{p-1}<...<T_1<T_0=T,\\
&&\mu_t^p=ce^{cT}\left(1+{\E}\left|Y_{T_{p-1}}\right|
^{2}+{\E}\underset{0\leq t\leq
T}{\sup}|S_s|^2+{\E}\int_{t}^{T
}\left[\left|f(s,0,0)\right|^2\!+\!
\left|g(s,0,0)\right|^2\right]ds\right),\\
&&M_p=2\mu_0^p \ \text{\ \ and \ \ } \int_{T_{p}}^{T_{p-1} } \rho(s,M_p) ds=
\frac{\mu_0^p}{M}.
\end{eqnarray*}
Therefore, by iteration, we construct a solution of RBDSDE \eqref{1.6} starting at $0$ with horizon $T$.
Finally, by the same argument used in \cite{MNJMO2}, there exists
a finite $p\geq 1$ such that $T_p=0$. Thus, we obtain
the existence of the solution of RBDSDEs \eqref{1.6} on  $[0,T]$.

{\bf  Uniqueness.}\\
Let $\left(Y,Z,K\right)$ and $\left(Y',Z',K'\right)$ belong in $\mathcal{E}^2([0,T])$ be two
solutions of the RBDSDE \eqref{1.6}. By virtue of
It\^o's formula, we have for any $\theta>0$
\begin{eqnarray*}\label{}
&&{\E}|Y_t-Y'_t|^2e^{\theta
t}+\theta{\E}\int_{t}^{T}|Y_s-Y'_s|^2e^{\theta
s}ds+{\E}\int_{t}^{T}|Z_s-Z'_s|^2e^{\theta
s}ds\\&&=2{\E}\int_{t}^{T}\left( Y_s-Y'_s\right)
\left(f(s,Y_s,Z_s)-f(s,Y'_s,Z'_s)\right) e^{\theta s}ds+
2{\E}\int_{t}^{T}\left( Y_s-Y'_s\right) e^{\theta s}(dK_s-dK'_s)\\&&+
{\E}\int_{t}^{T}|g(s,Y_s,Z_s)-g(s,Y'_s,Z'_s)|^2e^{\theta s}ds.
\end{eqnarray*}
Since $\displaystyle\int_{t}^{T}\left( Y_s-Y'_s\right) e^{\theta s}(dK_s-dK'_s)\leq0$, it follows from $({\bf H1})$, $({\bf H4})$ and Young's inequality
$2ab\leq \frac{1}{\theta}a^2+\theta b^2 $ that
\begin{eqnarray*}\label{}
&&{\E}|Y_t-Y'_t|^2e^{\theta
t}+(1-\alpha-\frac{1}{\theta} C){\E}\int_{t}^{T}|Z_s-Z'_s|^2e^{\theta
s}ds\\&&\leq
\left(\frac{1}{\theta}+1\right){\E}\int_{t}^{T}\rho(s,|Y_s-Y'_s|^2)e^{\theta
s}ds.
\end{eqnarray*}
Choosing $\theta=\frac{2C}{1-\alpha}$, 
 we get for all $ t\in[0,T]$,
\begin{eqnarray}\label{ST}
&&{\E}|Y_t-Y'_t|^2+\frac{1-\alpha }{2}{\E}\int_{t}^{T}|Z_s-Z'_s|^2ds\\&&\leq
e^{\frac{2C T}{1-\alpha}
}\left(\frac{1-\alpha}{2C}+1\right){\E}\int_{t}^{T}\rho(s,|Y_s-Y'_s|^2)ds \notag.
\end{eqnarray}
Therefore
\begin{eqnarray*}\label{}
{\E}|Y_t-Y'_t|^2\leq e^{\frac{2C T}{1-\alpha}
}\left(\frac{1-\alpha}{2C}+1\right)\int_{t}^{T}\rho(s,{\E}|Y_s-Y'_s|^2)ds.
\end{eqnarray*}
In view of the comparison Theorem for ODE, we have
\begin{eqnarray*}\label{}
{\E}|Y_t-Y'_t|^2\leq r(t), \ \ \forall\ t\in[0,T],
\end{eqnarray*}
where $r(t)$ is the maximum left shift solution of the following
equation:
\begin{eqnarray*}
\left\{
\begin{array}{ccc}
  u' & = & -e^{\frac{2C T}{1-\alpha}
}\left(\frac{1-\alpha}{2C}+1\right)\rho(t,u); \\
  u(T) & = & 0. \text{ \ \ \ \ \ \ \ \ \ \ \ \ \ \ \ \ \ \ \ \ \ \ \ \ \ \  } \\
\end{array}
\right.	
\end{eqnarray*}
By virtue of (H3), $r(t)=0$,
$t\in[0,T]$. Hence, $Y_t=Y'_t$, a.s., for any $t\in[0,T]$. It then follows from \eqref{ST} that
$Z_t=Z'_t$, a.s., for any $t\in[0,T]$. On the other hand, since
\begin{eqnarray*}
K_t-K'_t&=&Y_0-Y'_0-(Y_t-Y'_t)-
\int_{0}^{t}(f(s,Y_s,Z_s)-f(s,Y'_s,Z'_s))ds\\
&&-\int_{0}^{t}(g(s,Y_s,Z_s)-g(s,Y'_s,Z'_s))\overleftarrow{dB}_{s}+\int_{0}^{t}(Z_s-Z'_s)dW_{s},\ \ t \in [0,T],
\end{eqnarray*} we have, $K_t=K'_t$, a.s., for any $ t \in [0,T]$ which end the proof of the Theorem.
\end{proof}
\subsection{Comparison principle for reflected Backward doubly SDE}
Let $(\xi^1,f^1,S^1)$ and $(\xi^2,f^2,S^2)$ be two set of data, each one satisfying the conditions of Theorem \ref{TE}, and suppose in additional the following
\begin{description}
\item [(H6)]
\begin{itemize}
\item [$(i)$] $\xi^1\leq \xi^2$, a.s., 
\item [$(ii)$]$f^1(t,Y^1_t,Z^1_t)\leq f^2(t,Y^1_t,Z^1_t)$ or $f^1(t,Y^2_t,Z^2_t)\leq f^2(t,Y^2_t,Z^2_t)$, a.s., for a.e. $t\in[0,T]$,
\item [$(iii)$]  $S^1_t\leq S^2_t$, a.s., for a.e. $t\in[0,T]$, 
\end{itemize} 
\end{description}
where $(Y^1,Z^1,K^1)$ is a solution of RBSDE with data $(\xi^1,f^1,S^1)$ and $(Y^2,Z^2,K^2)$ is a solution of RBSDE with data $(\xi^2,f^2,S^2)$. Then we have the following comparison theorem.
\begin{theorem}
Assume the conditions of Theorem \ref{TE} and $({\bf H6})$ hold. Then $Y^1_t\leq Y^2_t$ a.s. $\forall\; t\in[0,T]$. 
\end{theorem}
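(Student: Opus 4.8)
The plan is to imitate, almost verbatim, the uniqueness step of Theorem~\ref{TE}, the only change being that It\^o's formula is applied to the nonnegative function $x\mapsto (x^{+})^{2}$ rather than to $x\mapsto x^{2}$. Set $\widehat{Y}=Y^{1}-Y^{2}$, $\widehat{Z}=Z^{1}-Z^{2}$, $\widehat{K}=K^{1}-K^{2}$; all three processes belong to $\mathcal{E}^{2}([0,T])$, so the stochastic integrals arising below are true martingales and vanish after taking expectations. Since $\Phi(x):=(x^{+})^{2}$ is $C^{1}$ with $\Phi'(x)=2x^{+}$ and $\Phi''(x)=2\mathbf{1}_{\{x>0\}}$ (no local-time term), the It\^o formula for doubly stochastic processes gives, for every $\theta>0$ and $t\in[0,T]$,
\begin{eqnarray*}
\E\big[e^{\theta t}|\widehat{Y}^{+}_{t}|^{2}\big]+\theta\,\E\!\int_{t}^{T}\! e^{\theta s}|\widehat{Y}^{+}_{s}|^{2}ds+\E\!\int_{t}^{T}\! e^{\theta s}\mathbf{1}_{\{\widehat{Y}_{s}>0\}}|\widehat{Z}_{s}|^{2}ds
&=&\E\big[e^{\theta T}|(\xi^{1}-\xi^{2})^{+}|^{2}\big]\\
&&+\,2\E\!\int_{t}^{T}\! e^{\theta s}\widehat{Y}^{+}_{s}\big(f^{1}(s,Y^{1}_{s},Z^{1}_{s})-f^{2}(s,Y^{2}_{s},Z^{2}_{s})\big)ds\\
&&+\,2\E\!\int_{t}^{T}\! e^{\theta s}\widehat{Y}^{+}_{s}\,d\widehat{K}_{s}+\E\!\int_{t}^{T}\! e^{\theta s}\mathbf{1}_{\{\widehat{Y}_{s}>0\}}\big\|g(s,Y^{1}_{s},Z^{1}_{s})-g(s,Y^{2}_{s},Z^{2}_{s})\big\|^{2}ds.
\end{eqnarray*}

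By ${\bf(H6)}(i)$ the terminal term vanishes. I would then check that the $d\widehat{K}$ term is nonpositive: on the support of $dK^{1}$ one has $Y^{1}_{s}=S^{1}_{s}\le S^{2}_{s}\le Y^{2}_{s}$ (using ${\bf(H6)}(iii)$ and ${\bf(H2)}$ for the second data), hence $\widehat{Y}^{+}_{s}=0$ there and $\int_{t}^{T}e^{\theta s}\widehat{Y}^{+}_{s}\,dK^{1}_{s}=0$, while $-\int_{t}^{T}e^{\theta s}\widehat{Y}^{+}_{s}\,dK^{2}_{s}\le0$ because $K^{2}$ is nondecreasing; so this term is dropped. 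For the driver term I split $f^{1}(s,Y^{1}_{s},Z^{1}_{s})-f^{2}(s,Y^{2}_{s},Z^{2}_{s})$ by inserting $f^{2}(s,Y^{1}_{s},Z^{1}_{s})$ (resp.\ $f^{1}(s,Y^{2}_{s},Z^{2}_{s})$) according to which alternative of ${\bf(H6)}(ii)$ holds: one of the two pieces is then $\le 0$ and, multiplied by $\widehat{Y}^{+}_{s}\ge0$, is discarded, while the other is controlled by ${\bf(H4)}$ applied to $f^{2}$ (resp.\ $f^{1}$) and, with Young's inequality $2ab\le\theta a^{2}+\tfrac1\theta b^{2}$ together with $\rho(s,0)=0$, is bounded by $\theta|\widehat{Y}^{+}_{s}|^{2}+\tfrac1\theta\rho(s,|\widehat{Y}^{+}_{s}|^{2})+\tfrac{C}{\theta}\mathbf{1}_{\{\widehat{Y}_{s}>0\}}|\widehat{Z}_{s}|^{2}$. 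The $g$-term is bounded by ${\bf(H4)}$ by $\rho(s,|\widehat{Y}^{+}_{s}|^{2})+\alpha\,\mathbf{1}_{\{\widehat{Y}_{s}>0\}}|\widehat{Z}_{s}|^{2}$. Inserting these bounds, the $\theta\E\int e^{\theta s}|\widehat{Y}^{+}_{s}|^{2}$ on the left cancels, and the choice $\theta=\tfrac{2C}{1-\alpha}$ makes the coefficient $1-\tfrac{C}{\theta}-\alpha=\tfrac{1-\alpha}{2}$ of the $|\widehat{Z}|^{2}$ term positive, so that term is dropped; bounding $e^{\theta t}\ge1$, $e^{\theta s}\le e^{\theta T}$, and applying Jensen's inequality (legitimate since $\rho(s,\cdot)$ is concave) yields
\begin{eqnarray*}
\varphi(t):=\E|(Y^{1}_{t}-Y^{2}_{t})^{+}|^{2}\ \le\ e^{\frac{2CT}{1-\alpha}}\Big(\frac{1-\alpha}{2C}+1\Big)\int_{t}^{T}\rho(s,\varphi(s))\,ds,\qquad t\in[0,T],
\end{eqnarray*}
which is exactly the inequality of type \eqref{ST} obtained in the uniqueness proof.

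From here the conclusion is identical to the end of the proof of Theorem~\ref{TE}: by the comparison theorem for ordinary differential equations $\varphi(t)\le r(t)$, where $r$ solves $r'(t)=-e^{\frac{2CT}{1-\alpha}}(\tfrac{1-\alpha}{2C}+1)\rho(t,r(t))$, $r(T)=0$, and part $(ii)$ of {\bf Condition A} forces $r\equiv0$; hence $\varphi\equiv0$, i.e.\ $(Y^{1}_{t}-Y^{2}_{t})^{+}=0$ a.s.\ for every $t$, and by path continuity $Y^{1}_{t}\le Y^{2}_{t}$ a.s.\ for all $t\in[0,T]$. The step I expect to demand the most care is the bookkeeping in the generalized It\^o formula for $(x^{+})^{2}$: the indicator $\mathbf{1}_{\{\widehat{Y}_{s}>0\}}$ must be kept attached to every occurrence of $|\widehat{Z}_{s}|^{2}$, on both sides, so that the absorption of the martingale-driving terms is valid; and the nonpositivity of the $d\widehat{K}$ term genuinely relies on combining the two Skorokhod (flatness) conditions \eqref{flat} with the ordering $S^{1}\le S^{2}$ — precisely the point where the reflection enters the comparison. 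Everything else is the by-now routine reduction, via Jensen's inequality and {\bf Condition A}, of a non-Lipschitz a priori bound to an ODE comparison argument.
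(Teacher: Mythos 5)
Your proposal is correct and follows essentially the same route as the paper: It\^o's formula applied to $|(Y^1-Y^2)^+|^2$, elimination of the reflection term via the Skorokhod conditions and $S^1\le S^2$, the same splitting of the drivers under $({\bf H6})(ii)$ with Young's inequality and $({\bf H4})$, and the reduction via Jensen's inequality and Condition A (ODE comparison) to conclude $\E|(Y^1_t-Y^2_t)^+|^2=0$. The only cosmetic differences are your exponential weight with a single parameter $\theta$ (the paper uses $\mu$ and $\delta$) and your explicit treatment of both alternatives in $({\bf H6})(ii)$, which the paper only sketches.
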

\begin{proof}
We shall assume that $f^1(t,Y^1_t,Z^1_t)\leq f^2(t,Y^1_t,Z^1_t)$, a.s., a.e. $t\in[0,T]$, and denote $\overline{Y}_t=Y^1_t-Y^2_t,\; \overline{Z}_t=Z^1_t-Z^2_t$ and $\overline{K}=K^1-K^2$.
Applying Itô formula to $|\overline{Y}^+_t|^2$, and taking the expectation, we have
\begin{eqnarray*}\label{C2}
&&\E\left[|\overline{Y}^+_t|^2+\int^T_t{\bf 1}_{\{\overline{Y}>0\}}|\overline{Z}_s|^2ds\right]\nonumber\\
&=&\E\left[|(\xi^1-\xi^2)^{+}|^2+2\int^T_t\overline{Y}^+_s(f^1(s,Y^1_s,Z^1_s)-f^2(s,Y^2_s,Z^2_s))ds\right.\\
&&+\left.2\int^T_t\overline{Y}^+_s d\overline{K}_s+\int^T_t{\bf 1}_{\{\overline{Y}_s>0\}}|g(s,Y^1_s,Z^1_s)-g(s,Y^2_s,Z^2_s)|^2ds\right].
\end{eqnarray*}
Since on $\{Y^1_t>Y^2_t\}$, $Y^1_t> Y^2_t\geq S^2_t\geq S^1_t$, we have
\begin{eqnarray*}
\int^T_t\overline{Y}^+_sd\overline{K}_s\leq -\int^T_t\overline{Y}^+_sdK^2_s\\
&\leq & 0.
\end{eqnarray*}

Assume now that $({\bf H6})$ in the statement applies to $f$ and $g$. Then
\begin{eqnarray}\label{C2}
&&\E\left[|\overline{Y}^+_t|^2+\int^T_t{\bf 1}_{\{\overline{Y}>0\}}|\overline{Z}_s|^2ds\right]\nonumber\\
& \leq &\left[ \int^T_t\overline{Y}^+_s(f^1(s,Y^1_s,Z^1_s)-f^1(s,Y^2_s,Z^2_s))ds+\int^T_t{\bf 1}_{\{\overline{Y}_s>0\}}|g(s,Y^2_s,Z^2_s)-g(s,Y^1_s,Z^1_s)|^2ds\right].\nonumber\\
\end{eqnarray}
On the other hand, using $({\bf H3})$ and the basic inequality $2ab\leq \delta a^2+\frac{1}{\delta}b^2$ we get
 \begin{eqnarray}\label{C6}
 \int^T_te^{\mu s}\overline{Y}^+_s(f^1(s,Y^1_s,Z^1_s)-f^1(s,Y^2_s,Z^2_s))ds &\leq & \delta\int_t^Te^{\mu s}|\overline{Y}^+_s|^2ds+\frac{1}{\delta}\int_t^Te^{\mu s}\rho(s,|\overline{Y}_s|^2){\bf 1}_{\{\overline{Y_s}>0\}}ds\nonumber\\
 &&+\frac{c}{\delta}\int_t^Te^{\beta s}|\overline{Z}_s|^2{\bf 1}_{\{\overline{Y_s}>0\}}ds\nonumber\\
 &\leq & \delta\int_t^Te^{\mu s}|\overline{Y}^+_s|^2ds+\frac{1}{\delta}\int_t^Te^{\mu s}\rho(s,|\overline{Y}^+_s|^2)ds\nonumber\\
 &&+\frac{c}{\delta}\int_t^Te^{\beta s}|\overline{Z}_s|^2ds,
 \end{eqnarray}
 and 
 \begin{eqnarray}\label{C8}
 &&\int^T_t{\bf 1}_{\{\overline{Y}_s>0\}}e^{\mu s}|g(s,Y^2_s,Z^2_s)-g(s,Y^1_s,Z^1_s)|^2ds\nonumber\\&\leq & \int_t^Te^{\mu s}\rho(s,|\overline{Y}_s|^2){\bf 1}_{\{\overline{Y_s}>0\}}ds+\alpha\int_t^Te^{\mu s}|\overline{Z}_s|^2{\bf 1}_{\{\overline{Y_s}<0\}}ds\nonumber\\
 &\leq & \int_t^Te^{\mu s}\rho(s,|\overline{Y}^{+}_s|^2)ds+\alpha\int_t^Te^{\mu s}|\overline{Z}_s|^2ds.
 \end{eqnarray}
Putting \eqref{C6}-\eqref{C8} in \eqref{C2} and since $ 0<\alpha<1$, we get
\begin{eqnarray*}
&&\E(e^{\mu t}|\overline{Y}_t^+|^2)+(\mu-\delta)\E\left(\int_t^Te^{\mu s}|\overline{Y}^+_s|^2ds\right)+\left(1-\alpha-\frac{\alpha}{\delta}\right)\E\left(\int_t^Te^{\mu s}|\overline{Y}^+_s|^2ds\right)\nonumber\\
&\leq &
\left(\frac{1}{\delta}+1\right)\E\left(\int^T_te^{\mu s}\rho(s,|\overline{Y}_s^+|^2)ds\right).
\end{eqnarray*}
Finally choosing $\mu>0$ and $\delta>0$ such that $\mu-\delta>0$ and $ 1-\alpha-\frac{\alpha}{\delta}>0$, we have 
\begin{eqnarray*}
\E(e^{\mu t}|\overline{Y}_t^+|^2)&\leq &
C\E\left(\int^T_te^{\mu s}\rho(s,|\overline{Y}_s^+|^2)ds\right),
\end{eqnarray*}
which by using Fubini's theorem and Jensen's inequality leads to
\begin{eqnarray*}
\E(|\overline{Y}_t^+|^2)&\leq &
C\int^T_t\rho(s,\E(|\overline{Y}_s^+|^2))ds.
\end{eqnarray*}
Using the same argument as in the proof of uniqueness, we get that
$\E(|\overline{Y}_t^+|^2)=0$ and hence $\overline{Y}_t^+=0$ which implies that $Y^1_t\leq Y^2_t$.
\end{proof}

\section{Obstacle problem for a non linear parabolic stochastic PDEs with non Lipschitz coefficient}

The goal of this section, is to derive existence of stochastic viscosity solution to class of reflected stochastic PDEs called "obstacle problem" for SPDE. Roughly speaking, SPDEs is of the form
Let us consider the following related obstacle problem for a parabolic SPDE
\begin{eqnarray}\label{5edps}
SPDE^{(f,g,h,l)}\left\{
\begin{array}{ll}
&\displaystyle \min\Big\{u(t,x)- h(t,x)\;,\;\frac{\partial u}{\partial t}(t,x)+Lu(t,x)+f(t,x,u(t,x),(\sigma^*Du)(t,x))\\\\
&\displaystyle\hspace{3.5cm}+g(t,x,u(t,x))\overleftarrow{\dot{B}_{t}}\Big\},\; \; \; \; (t,x)\in[0,T]\times \mathbb{R}^q, \\\\
&\displaystyle u(T,x)= l(x)\hbox{,}\; \; \; \;x\in\mathbb{R}^q.
\end{array}
\right.
\end{eqnarray}
where, $\dot{B}_{t}=dB_{t}/dt$ and $L$ is a second order differential operator defined by
\begin{eqnarray*}
L=\frac{1}{2}\sum_{i,j=1}^d(\sigma\sigma^*)_{ij}\frac{\partial^2}{\partial x_i\partial x_j}+\sum_{i=1}^d b_i\frac{\partial}{\partial x_i}.	
\end{eqnarray*}
Our method is fully probabilistic and used reflected BDSDEs studied in the previous sections and is done when data $l: \R^{d}\rightarrow\mathbb{R}$,\ \ $f:\Omega_2 \times [0,T]\times
\R^{q}\times \mathbb{R}^{} \times \mathbb{R}^{ d}\rightarrow
\mathbb{R}^{},$ \ \ $g:\Omega_2 \times [0,T]\times \R^{d}\times \mathbb{R}^{}
\rightarrow \mathbb{R}^{l}$ and $h: \Omega_2 \times[0,T]\times\R^{d}\rightarrow \mathbb{R}^{}$ satisfy the following Assumptions.
\begin{description}
  \item[(\bf {A1})] $l$ is Lipschitz continuous with the common Lipschitz constant $C$,

  \item[(\bf {A2})] $h$ is continuous such that, $|h(t,x)|\leq C(1+|x|^p),\ (t,x)\in [0,T]\times\R^{d}$ and $h(T,x)\leq l(x),$

\item[(\bf {A3})] for any  $ (\omega_2, y, z) \in \Omega_2 \times
\mathbb{R} \times \mathbb{R}^{d}$ and $t\in[0,T]$, $x_{1},x_{2} \in
\times \mathbb{R}^{q}$,
$$|f(t,x_{1},y,z)-f(t,x_{2},y,z)| \leq
 C| x_{1}-x_{2}|,$$
\item[{\bf(A4)}]$g\in C_b^{0,2,3}([0,T]\times \R^{q}\times \R; \R^l)$ 

\item[{\bf(A5)}]for all $(y_{1},z_{1}),\ (y_{2},z_{2})\in \mathbb{R}%
 \times \mathbb{R}^{d}$, $t\in [0,T]$ and $x\in\R^q$,
\begin{eqnarray*}
\left\{
\begin{array}{ll}
|f(t,x,y_{1},z_{1})-f(t,x,y_{2},z_{2}) |^{2} \leq \rho(t,|
y_{1}-y_{2}|^{2})+C| z_{1}-z_{2}|^{2} \vspace{0.2cm} &
\\\\
|f(t,0,y,0)| \leq \varphi(t)+C|y|,\; \mbox{with}\;  \varphi(.) \in\mathcal{M}^{2}(\textbf{\textit{F}},[0,T])&
\end{array}
\right.,
\end{eqnarray*}
where $C > 0$ is a constants and $\rho: [0,T]\times
\mathbb{R}^+ \rightarrow \mathbb{R}^+$ satisfies:

\begin{itemize}
\item[(i)] for fixed $t\in[0,T]$, \ $\rho(t,.)$ is concave and
non-decreasing such that $\rho(t,0)=0.$

\item[(ii)] for fixed $u$, $\int_0^T\rho(t,u)dt<+\infty$
\item[(iii)] for any $M>0$, the following ODE
\begin{equation*}
\left\{
\begin{array}{ccc}
u^{\prime } & = & -M\rho (t,u) \\
u(T) & = & 0%
\end{array}%
\right.
\end{equation*}%
has a unique solution $u(t)\equiv 0,\ \ t\in \lbrack 0,T]$,
\end{itemize}
\end{description}
In addition, we will consider the following. For each $t\geq 0$, we consider ${\bf F}^t=\{\mathcal{F}^t_s\}_{t\leq s\leq T}$ defined by $\mathcal{F}^t_s=\mathcal{F}_{t,s}^{W}\otimes\mathcal{F}^{B}_{s,T}$ and $\mathcal{M}_{0,T}^{B}$ denote the set of all $\textbf{\textit{F}}_{}^{B}$-stopping times $\tau$ such that $0\leq \tau\leq T,$ $\P_{2}-$almost surely. For generic Euclidean spaces $E$ and $E_1$, we introduce the following vector spaces of functions:

\medskip\noindent$\bullet$ $\mathcal{C}^{k,n}([0, T] \times E;E_1)$  design the space of all functions defined on $[0, T] \times E$ with values in $E_1$, which are $k-$ times continuously differentiable in $t$ and $n-$ times continuously differentiable in $x$ and $\mathcal{C}_b^{k,n}([0, T] \times E;E_1)$ denotes the subspace of $\mathcal{C}^{k,n}([0, T] \times E;E_1)$ which contains all uniformly bounded partial derivatives functions;

\medskip\noindent$\bullet$ For any sub $\sigma-$field $\mathcal{G}\subset\mathcal{F}_{T}^{B}$,\
$\mathcal{C}^{k,n}(\mathcal{G}, [0, T] \times E;E_1)$, (resp.  $\mathcal{C}_b^{k,n}(\mathcal{G}, [0, T] \times E;E_1)$ stands for the space of all random variables with values in $\mathcal{C}^{k,n}( [0, T] \times E;E_1)$, (resp.  $\mathcal{C}_b^{k,n}( [0, T] \times E;E_1)$ which are $\mathcal{G}\otimes\mathcal{B}([0,T]\times E)-$measurable;

\medskip\noindent$\bullet$ $\mathcal{C}^{k,n}(\textbf{\textit{F}}_{}^{B}, [0, T] \times E;E_1)$, (resp.
$\mathcal{C}_b^{k,n}(\textbf{\textit{F}}_{}^{B}, [0, T] \times E;E_1)$ is the space of random fields $\varphi\in \mathcal{C}^{k,n}(\mathcal{F}_T^{B}, [0, T] \times E;E_1)$, (resp.  $\mathcal{C}_b^{k,n}(\mathcal{F}_T^{B}, [0, T] \times E;E_1)$ such that for any $x\in E$, the mapping $(\omega^2,t)\mapsto \varphi(\omega_2,t,x)$ is $\textbf{\textit{F}}_{}^{B}-$progressively measurable.

\medskip\noindent$\bullet$  For any sub $\sigma-$field $\mathcal{G}\subset\mathcal{F}_{T}^{B}$,\ $\mathcal{LSC}([0, T] \times E;E_1)$ (resp. $\mathcal{USC}([0, T] \times E;E_1)$) designs the space of all lower (resp. upper) semi continuous functions defined on $[0, T] \times E$ with values in $E_1$;

\medskip\noindent$\bullet$ $\mathcal{LSC}(\mathcal{G}, [0, T] \times E;E_1)$, (resp.  $\mathcal{USC}(\mathcal{G}, [0, T] \times E;E_1)$ stands for all random variables with values in $\mathcal{LSC}( [0, T] \times E;E_1)$, (resp.  $\mathcal{USC}( [0, T] \times E;E_1)$ which are $\mathcal{G}\otimes\mathcal{B}([0,T]\times E)-$measurable;

\medskip\noindent$\bullet$ $\mathcal{LSC}(\textbf{\textit{F}}_{}^{B}, [0, T] \times E;E_1)$, (resp.
$\mathcal{USC}(\textbf{\textit{F}}_{}^{B}, [0, T] \times E;E_1)$ denotes the space of random fields $\varphi\in \mathcal{LSC}(\mathcal{F}_T^{B}, [0, T] \times E;E_1)$, (resp.  $\mathcal{USC}(\mathcal{F}_T^{B}, [0, T] \times E;E_1)$ such that for any $x\in E$, the mapping $(\omega_2,t)\mapsto \varphi(\omega_2,t,x)$ is $\textbf{\textit{F}}_{}^{B}-$progressively measurable.

\medskip\noindent$\bullet$ For any sub $\sigma-$field $\mathcal{G}\subset\mathcal{F}_{T}^{B}$,\ and for any $p\geq 0$,
$L^{p}(\mathcal{G}, E)$ design the space of $\mathcal{G}-$measurable random variables $\xi$ with values in $E$ such that $\E |\xi|^p<\infty.$

\medskip \noindent Furthermore, for any $(t,x,y)\in[0,T]\times\R^{q}\times\R$, we denote $D=D_x=(\frac{\partial}{\partial x_1},\ldots, \frac{\partial}{\partial x_q}),$ $D_y=\frac{\partial}{\partial y},$\  $D_t=\frac{\partial}{\partial t},$\ and\ $D_{xx}=\left(\partial^2_{ x_ix_j}\right)_{i,j=1,\dots,q}.$ The meaning of $D_{xy}$ and $D_{yy}$ is then self-explanatory.

\medskip\noindent We note that
$$
\mathcal{C}^{0,0}(\textbf{\textit{F}}_{}^{B}, [0, T] \times E;E_1)=\mathcal{LSC}(\textbf{\textit{F}}_{}^{B}, [0, T] \times E;E_1)\cap \mathcal{USC}(\textbf{\textit{F}}_{}^{B}, [0, T] \times E;E_1).
$$
\subsection{Notion of stochastic viscosity solution}
A solution of the obstacle problem for SPDEs $(f, g, h, l)$ is a random field $u:\Omega_2\times[0,T]\times \mathbb{R}^q\rightarrow \mathbb{R}$ which satisfies \eqref{5edps}.
More precisely, in this section, we will consider the solution of SPDE \eqref{5edps} with data $(f, g, h, l)$ in the
stochastic viscosity sense, inspired by the works of Buckdahn and Ma \cite{5, 6} and B. Djehiche, N'zi and Owo \cite{DON}. To this end, we define the process $\eta \in \mathcal{C}^{0,0,0}([0, T] \times \R^{q}\times \R; \R)$ as the solution to the following SDE,
\begin{eqnarray*}\label{}
\eta(t,x,y)&=&y+\frac{1}{2}\int_{t}^{T}\langle g,D_yg\rangle(s,x,\eta(s,x,y))ds\notag\\&&
+\int_{t}^{T}\langle g(s,x,\eta(s,x,y)),\overleftarrow{dB_s}\rangle,\ \ \ 0\leq
t\leq T.
\end{eqnarray*}
Under condition $({\bf A4})$, the mapping $y\mapsto \eta(t,x,y)$ is a diffeomorphism for all $(t,x)$, $\P^{2}-a.s.$ such that
$ \eta\in\mathcal{C}^{0,2,2}(\textbf{\textit{F}}_{}^{B},[0, T] \times \R^{q}\times \R; \R)$. Let $\varepsilon(t,x,y)$ denotes the $y-$inverse of $\eta(t,x,y)$. Then since $\varepsilon(t,x,\eta(t,x,y))=y$ one can show that (see Buckdahn and Ma \cite{5, 6})
\begin{eqnarray}\label{5edst}
\varepsilon(t,x,y)&=&y-
\int_{t}^{T}\langle D_y\varepsilon(s,x,y) ,g(s,x,y)\circ \overleftarrow{dB_s}\rangle,\ \ \ 0\leq
t\leq T.\notag
\end{eqnarray}
Furthermore, if
$\psi(t,x)=\eta(t,x,\varphi(t,x)),$ for $(t,x)\in[0, T] \times \R^{q},$ then $\psi\in \mathcal{C}^{0,p}(\textbf{\textit{F}}_{}^{B},[0, T] \times \R^{q}; \R)$ if and only if $\varphi\in \mathcal{C}^{0,p}(\textbf{\textit{F}}_{}^{B},[0, T] \times \R^{q}; \R)$, for $p=0,1,2$. Next in order to simplify the notation, we set
\begin{eqnarray*}
\mathcal{A}_{f,g}(\psi(t,x))=-L\psi(t,x)-
f\big(t,x,\psi(t,x),\sigma^*(t,x)D_x\psi(t,x)\big)+\frac{1}{2}\langle g,D_yg\rangle(t,x,\psi(t,x)).	
\end{eqnarray*}
We now give the definition of stochastic viscosity solution of the reflected
SPDE$(f, g, h, l)$.

\begin{definition}\label{5df1}
$(a)$ A random field $u\in \mathcal{LSC}(\textbf{\textit{F}}_{}^{B}, [0, T] \times \R^{q};\R)$ is said to be a stochastic viscosity subsolution of SPDE$(f,g,h,l)$ if $u(T,x)\leq l(x),$ for all $\ x\in\R^{q};$ and if for any stopping time $\tau\in\mathcal{M}_{0,T}^{B},$ any state variable $ \xi\in L^{0}(\mathcal{F}_{\tau}^{B}, \R^{q}) $, and any random field $\varphi\in\mathcal{ C}^{1,2}(\mathcal{F}_{\tau}^{B}, [0, T] \times \R^{q};\R)$ such that, for $\P_2-$almost all \ $\omega_{2}\in\left\{0<\tau<T\right\}$, it holds
$$
u(\omega_{2}, t,x)-\psi(\omega_{2}, t,x)\leq 0=u(\tau(\omega_{2}),\xi(\omega_{2}))-\psi(\tau(\omega_{2}),\xi(\omega_{2})),
$$
for all $(t,x)$ in a neighborhood of $(\tau(\omega_{2}),\xi(\omega_{2}))$,  where $\psi(t,x)\overset{\Delta}{=}\eta(t,x,\varphi(t,x))$, then we have, $\P_{2}-a.s.$ on $\left\{0<\tau<T\right\},$
\begin{eqnarray}\label{df1}
\min\Big(u(\tau,\xi)-h(\tau,\xi)\;,\;\mathcal{A}_{f,g}(\psi(\tau,\xi))- D_y\psi(\tau,\xi)D_t\varphi(\tau,\xi)\Big)\leq 0,
\end{eqnarray}

\medskip\noindent
$(b)$ A random field $u\in \mathcal{USC}(\textbf{\textit{F}}_{}^{B}, [0, T] \times \R^{q};\R)$ is said to be a stochastic viscosity supersolution of SPDE$(f,g,h,l)$ if $u(T,x)\geq l(x),$ for all $\ x\in\R^{q};$ and if for any stopping time $\tau\in\mathcal{M}_{0,T}^{B},$ any state variable $ \xi\in L^{0}(\mathcal{F}_{\tau}^{B}, \R^{q}) $, and any random field $\varphi\in\mathcal{ C}^{1,2}(\mathcal{F}_{\tau}^{B}, [0, T] \times \R^{q};\R)$ such that, for $\P_2-$almost all \ $\omega_{2}\in\left\{0<\tau<T\right\}$, it holds
$$
u(\omega_{2}, t,x)-\psi(\omega_{2}, t,x)\geq 0=u(\tau(\omega_{2}),\xi(\omega_{2}))-\psi(\tau(\omega_{2}),\xi(\omega_{2})),$$ for all $(t,x)$ in a neighborhood of $(\tau(\omega_{2}),\xi(\omega_{2}))$, then we have, $\P_{2}-a.s.$ on $\left\{0<\tau<T\right\},$
\begin{eqnarray}\label{df2}
\min\Big(u(\tau,\xi)-h(\tau,\xi)\;,\;\mathcal{A}_{f,g}(\psi(\tau,\xi))- D_y\psi(\tau,\xi)D_t\varphi(\tau,\xi)\Big)\geq 0,
\end{eqnarray}

\medskip\noindent
$(c)$ A random field $u$ is said to be a stochastic viscosity solution of SPDE$(f,g,h,l)$ if $u\in \mathcal{C}^{0,0}(\textbf{\textit{F}}_{}^{B}, [0, T] \times \R^{q};\R)$ and is both a stochastic viscosity subsolution and supersolution.
\end{definition}

\begin{remark}
If, in SPDE$(f,g,h,l)$, $g\equiv 0$, then for all $(t,x,y)$, $\eta(t,x,y)=y$ and  $\psi(t,x)=\varphi(t,x)$. Hence, if $f$ is deterministic, the above definition coincides with the deterministic case (El Karoui et al. \cite{15}). Thus, any stochastic viscosity (sub- or super-) solution is viewed as a (deterministic) viscosity (sub- or super-) solution for each fixed $\omega_{2}\in\left\{0<\tau<T\right\}$, modulo the $\mathcal{F}_{\tau}^{B}-$ measurability requirement of the test function $\varphi$.
\end{remark}
\subsection{Existence of stochastic viscosity solution}
\label{sect2}
This subsection is devoted to prove the existence of stochastic viscosity solutions to obstacle problem for SPDE \eqref{5edps} using the result of Section 2. Before giving the main result, let state the Markovian framework of decoupled forward-backward
SDE. For $b:\R^{d}\rightarrow \R^{d}, \, \sigma; \R^{d}\rightarrow \R^{d\times d}$ are uniformly Lipschitz continuous (with a common Lipschitz constant $C>0$), let consider this needed progressive SDE and the following regularity result associated to it (see the theory of SDEs, for more detail): for each $(t,x)\in [0,T]\times\R^{d}$,
\begin{eqnarray}\label{5eds}
X_s^{t,x} = x+\int_{t}^{s} b(X_r^{t,x})dr+\int_{t}^{s}
\sigma(X_r^{t,x}) dW_r, \ \ \ s\in[t,T],
\end{eqnarray}

\begin{proposition}\label{pro} There exists a constant $C > 0$ such that for all $t,t'\in [0,T]$ and $x,x'\in\R^q$.
\begin{eqnarray}\label{}
\E\left(\sup_{0\leq s\leq T}|X_s^{t,x}-X_s^{t',x'}|^{p}\right) \leq C(|t-t'|^{p|2}+ |x-x'|^{p}).
\end{eqnarray}
\end{proposition}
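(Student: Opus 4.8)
The statement to prove is Proposition~\ref{pro}: the standard Hölder/Lipschitz regularity estimate for the flow of the forward SDE~\eqref{5eds} in the $L^p$-norm, uniformly in time, with respect to both the starting point $x$ and the starting time $t$.

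The plan is to proceed by the classical two-step argument, treating the dependence on $x$ and the dependence on $t$ separately and then combining. First I would fix $t=t'$ and estimate $\E(\sup_{0\le s\le T}|X_s^{t,x}-X_s^{t,x'}|^p)$. The idea is to write the difference $X_s^{t,x}-X_s^{t,x'}$ as $x-x'$ plus the two integral terms involving the increments $b(X_r^{t,x})-b(X_r^{t,x'})$ and $\sigma(X_r^{t,x})-\sigma(X_r^{t,x'})$; then apply the elementary inequality $|a+b+c|^p\le 3^{p-1}(|a|^p+|b|^p+|c|^p)$, take the supremum over $s$, take expectations, and bound the drift term with Hölder's inequality in $r$ and the diffusion term with the Burkholder--Davis--Gundy inequality. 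Using the common Lipschitz constant $C$ for $b$ and $\sigma$, this yields
\begin{eqnarray*}
\E\Big(\sup_{t\le s\le u}|X_s^{t,x}-X_s^{t,x'}|^p\Big) \le C_p\,|x-x'|^p + C_p\int_t^u \E\Big(\sup_{t\le r\le s}|X_r^{t,x}-X_r^{t,x'}|^p\Big)\,ds,
\end{eqnarray*}
and Gronwall's lemma then gives the bound $C|x-x'|^p$ with $C$ depending only on $p,T,C$ (and the Lipschitz constant). One should note that the initial-data Lipschitz estimate $\E(\sup_{0\le s\le T}|X_s^{t,x}|^p)\le C(1+|x|^p)$ is obtained along exactly the same lines and will be needed below; I would state it as a preliminary bound.

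Next I would handle the time dependence. Assume without loss of generality $t\le t'$. For $s\ge t'$ one has $X_s^{t,x} = X_{t'}^{t,x} + \int_{t'}^s b(X_r^{t,x})\,dr + \int_{t'}^s \sigma(X_r^{t,x})\,dW_r$, so $X_s^{t,x}$ solves the same SDE on $[t',T]$ but started from the random point $X_{t'}^{t,x}$ at time $t'$. By the flow property and the $x$-regularity just proved (applied conditionally, or via the representation $X_s^{t,x}=X_s^{t',X_{t'}^{t,x}}$ together with measurability of the flow), I get
\begin{eqnarray*}
\E\Big(\sup_{t'\le s\le T}|X_s^{t,x}-X_s^{t',x}|^p\Big)\le C\,\E\big(|X_{t'}^{t,x}-x|^p\big).
\end{eqnarray*}
It then remains to estimate $\E(|X_{t'}^{t,x}-x|^p)$, i.e.\ the displacement of the process over the short interval $[t,t']$; writing $X_{t'}^{t,x}-x=\int_t^{t'}b(X_r^{t,x})\,dr+\int_t^{t'}\sigma(X_r^{t,x})\,dW_r$, applying Hölder and BDG, and using the linear-growth bound $\E(\sup|X_r^{t,x}|^p)\le C(1+|x|^p)$ that follows from the Lipschitz property, one obtains $\E(|X_{t'}^{t,x}-x|^p)\le C(1+|x|^p)|t-t'|^{p/2}$. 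For $s<t'$ the process $X_s^{t,x}$ has not yet "started" from the $t'$-clock, but since $X^{t',x}$ is constant equal to $x$ on $[t,t']$ in the relevant comparison (or, more carefully, one compares on $[0,T]$ after extending both by their initial values), the contribution over $[t,t']$ is again controlled by $\E(\sup_{t\le s\le t'}|X_s^{t,x}-x|^p)\le C(1+|x|^p)|t-t'|^{p/2}$ by the same short-time estimate. Combining the two regimes gives $\E(\sup_{0\le s\le T}|X_s^{t,x}-X_s^{t',x}|^p)\le C(1+|x|^p)|t-t'|^{p/2}$.

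Finally, combining the two estimates via $|X_s^{t,x}-X_s^{t',x'}|^p\le 2^{p-1}(|X_s^{t,x}-X_s^{t',x}|^p+|X_s^{t',x}-X_s^{t',x'}|^p)$, taking the supremum and the expectation, and renaming constants yields the claimed inequality. I expect the main technical obstacle to be the time-regularity half: one must be careful about what "$X_s^{t',x}$ for $s<t'$" means (the natural convention being $X_s^{t',x}=x$ there), and one must correctly invoke the flow/Markov property to reduce the comparison on $[t',T]$ to the already-established spatial-Lipschitz estimate applied at the random initial condition $X_{t'}^{t,x}$ — this requires the $x$-estimate to hold with a deterministic constant, which it does, so a conditioning argument goes through cleanly. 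The growth estimate $\E(\sup|X|^p)\le C(1+|x|^p)$ is the other ingredient one must not forget to establish first, since it is what converts the short-time displacement bound into the stated form with the factor $(1+|x|^p)$ — though note the statement as written absorbs this into the constant, so strictly one only needs it to get a clean final constant.
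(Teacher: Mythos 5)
Your argument is the standard textbook proof of this flow-regularity estimate, and it is essentially correct; note, however, that the paper itself offers no proof of Proposition \ref{pro} at all --- it simply states the estimate as a known consequence of classical SDE theory (``see the theory of SDEs, for more detail''), so there is nothing in the paper to compare your route against. Your two-step scheme (spatial Lipschitz dependence via the elementary power inequality, H\"older for the drift, Burkholder--Davis--Gundy for the martingale part and Gronwall; then time regularity via the flow property $X_s^{t,x}=X_s^{t',X_{t'}^{t,x}}$ for $s\geq t'$, the convention $X_s^{t',x}=x$ on $[t,t']$, and the short-time displacement bound) is exactly how this is done in the standard references, and your remark that the conditioning step requires the spatial estimate with a deterministic constant is the right point of care. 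Two small caveats: the argument as sketched implicitly needs $p\geq 2$ for the BDG/H\"older steps in this form (for $1\leq p<2$ one usually deduces the claim from the $p=2$ case or adjusts the exponents); and the time-increment term genuinely carries the factor $(1+|x|^p)$, so the constant in the displayed inequality cannot be taken uniform over all $x\in\R^q$ as the proposition is literally written --- the correct statement is $\E\bigl(\sup_{0\leq s\leq T}|X_s^{t,x}-X_s^{t',x'}|^p\bigr)\leq C\bigl((1+|x|^p)|t-t'|^{p/2}+|x-x'|^p\bigr)$, which is what your proof delivers and which suffices for the use made of the proposition in the paper (the exponent $p|2$ in the statement is of course a typo for $p/2$). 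You flag this yourself, but saying the statement ``absorbs this into the constant'' is slightly too generous: it can only be absorbed on bounded sets of $x$, not globally.
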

Next, let us consider RBDSDE$(l(X_{T}^{t,x}),f, g, h)$:
\begin{eqnarray}\label{5eddsr}
\left\{
\begin{array}{ll}
&\displaystyle (i)\; Y_{s}^{t,x}=l(X_T^{t,x})
+\int_{s}^{T}f(r,X_r^{t,x},Y_{r}^{t,x},Z_{r}^{t,x})dr
\displaystyle+\int_{s}^{T}g(r,X_r^{t,x},Y_{r}^{t,x})\overleftarrow{dB_r}\\
&\displaystyle\; \; \; \hspace{1cm}+K_{T}^{t,x}-K_{s}^{t,x}-\int_{s}^{T}Z_{r}^{t,x}dW_{r},\; \; \; \; \; s\in[t,T],\\\\
&\displaystyle (ii)\; Y_{s}^{t,x}\geq h(s,X_s^{t,x}),\; \; \; \; \; s\in[t,T],\\\\
&\displaystyle (iii)\; \{K_{s}^{t,x}\}\ \text{is increasing and continuous such that}\ K_{0}^{t,x}=0\\& \hspace{.5cm}\text{and}\  \displaystyle\int_t^T(Y_{r}^{t,x}-h(r,X_r^{t,x}))dK_{r}^{t,x}=0  \hbox{.}
\end{array}
\right.
\end{eqnarray}
According to Theorem \ref{TE} of sections 2, for each $(t,x)\in[0,T]\times\R^d$, RBDSDE \eqref{5eddsr} has a unique solution $(Y^{t,x},Z^{t,x},K^{t,x})\in\mathcal{E}^{2}([t,T])$. We can extend this solution to $[0,t]$ by choosing $Y_{s}^{t,x}=Y_{t}^{t,x},\, Z_{s}^{t,x}=0,\, K_{t}^{t,x}=K_{t}^{t,x}$. Furthermore, we have
\begin{proposition}
Let $u:\Omega_2\times[0,T]\times \mathbb{R}^d\rightarrow \mathbb{R}$ be a random field defined by 
\begin{eqnarray}\label{ao}
u(t,x)\overset{\Delta}{=}Y_{t}^{t,x}, \ \ \text{for all} \ \ (t,x)\in [0,T]\times \mathbb{R}^q. \end{eqnarray}
Then, $u\in \mathcal{C}^{0,0}(\textbf{\textit{F}}_{}^{B},[0, T] \times \R^{d}; \R)$.
\end{proposition}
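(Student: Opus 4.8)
The plan is to establish that the random field $u(t,x)=Y^{t,x}_t$ is jointly continuous in $(t,x)$ and lives in the right measurability class; since $\mathcal{C}^{0,0}(\textbf{\textit{F}}^{B},[0,T]\times\R^d;\R)=\mathcal{LSC}\cap\mathcal{USC}$, it suffices to prove continuity together with the $\textbf{\textit{F}}^{B}$-progressive measurability. First I would recall that for fixed $(t,x)$ the triple $(Y^{t,x},Z^{t,x},K^{t,x})$ is the unique solution of RBDSDE \eqref{5eddsr}, built in Theorem \ref{TE} as the limit of the Picard-type iteration \eqref{eq3}; since each iterate is $\mathcal{F}_s$-adapted and the coefficients $f,g,h,l$ depend on $\omega$ only through $B$ (on $\Omega_2$) while $X^{t,x}$ depends only on $W$ (on $\Omega_1$), the value $Y^{t,x}_t$, being $\mathcal{F}_t=\mathcal{F}^W_t\otimes\mathcal{F}^B_{t,T}$-measurable and obtained by taking a conditional-expectation-type limit that integrates out the $W$-randomness on $[t,T]$, is $\mathcal{F}^B_{t,T}$-measurable; the joint $\textbf{\textit{F}}^{B}$-progressive measurability in $(t,x)$ then follows from the measurable dependence of the whole construction on the parameters.

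The analytic core is the continuity estimate. The plan is to show that there is a constant $C>0$ and, via \textbf{Condition A}, control of a $\rho$-type modulus, so that
\begin{eqnarray*}
\E\Big(\sup_{0\leq s\leq T}\big|Y^{t,x}_s-Y^{t',x'}_s\big|^2\Big)\leq \Phi\big(|t-t'|^{1/2}+|x-x'|\big),
\end{eqnarray*}
where $\Phi(r)\to 0$ as $r\to 0$. To get this I would apply It\^o's formula to $|Y^{t,x}_s-Y^{t',x'}_s|^2$ exactly as in the proof of Theorem \ref{TE}: the obstacle cross-term $\int(Y^{t,x}-Y^{t',x'})(dK^{t,x}-dK^{t',x'})$ is handled by the Skorokhod flatness condition (iii) of \eqref{5eddsr} together with $S^1_t\le S^2_t$-type bookkeeping — but here the obstacles are $h(s,X^{t,x}_s)$ and $h(s,X^{t',x'}_s)$, which are merely close, not ordered, so that term must instead be dominated using Young's inequality and a uniform bound on $K$, picking up a factor $\E\sup_s|h(s,X^{t,x}_s)-h(s,X^{t',x'}_s)|^2$, which is small by continuity of $h$ and Proposition \ref{pro}. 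Using \textbf{(H4)}/\textbf{(A5)}, Young's inequality $2ab\le\frac1\theta a^2+\theta b^2$ with $\theta=\frac{1-\alpha}{2C}$ to absorb the $Z$-terms, the Lipschitz property \textbf{(A3)} of $f$ in $x$, the Lipschitz property \textbf{(A1)} of $l$, and Proposition \ref{pro} to bound $\E\sup_s|X^{t,x}_s-X^{t',x'}_s|^2\le C(|t-t'|+|x-x'|^2)$, one arrives at a closed inequality of the form $\E|Y^{t,x}_t-Y^{t',x'}_t|^2\le \kappa(|t-t'|^{1/2}+|x-x'|)+C\int_t^T\rho\big(s,\E|Y^{t,x}_s-Y^{t',x'}_s|^2\big)\,ds$. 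Applying the comparison theorem for the ODE $u'=-M\rho(t,u)$, $u(T)=0$ exactly as in the uniqueness argument of Theorem \ref{TE}, the $\rho$-term contributes nothing in the limit and continuity follows; a Burkholder--Davis--Gundy step upgrades this to the supremum norm, hence $(t,x)\mapsto Y^{t,x}$ is continuous in $\mathcal{S}^2$, and in particular $u(t,x)=Y^{t,x}_t$ is continuous.

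The main obstacle I anticipate is precisely the treatment of the reflecting terms under mere closeness (rather than ordering) of the two obstacle processes: one needs a uniform-in-$(t,x)$ a priori bound $\E\sup_s|K^{t,x}_s|^2\le C$ (obtained by isolating $K^{t,x}$ from equation \eqref{5eddsr}(i) and using the linear-growth bounds \textbf{(A5)} on $f(\cdot,0,\cdot,0)$, \textbf{(A2)} on $h$, \textbf{(A1)} on $l$, together with the a priori estimate $\E\sup_s|Y^{t,x}_s|^2\le C(1+|x|^{2p})$ that follows along the lines of Lemma \ref{l3}), and then the cross-term must be split so that the ``bad'' part is multiplied by the small quantity $\E\sup_s|h(s,X^{t,x}_s)-h(s,X^{t',x'}_s)|^2$. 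Once that is in place, the rest is a routine repetition of the estimates already carried out in Section 2, and the measurability statement is inherited from the iterative construction; so the statement follows.
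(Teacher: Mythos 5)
Your route (a direct a priori estimate on $\E|Y^{t,x}_s-Y^{t',x'}_s|^2$ in the spirit of El Karoui et al.) is genuinely different from the paper's, but as written it has two gaps, one of which is serious. The serious one is the final step: even granting your closed inequality and the obstacle bookkeeping, what you obtain is continuity of $(t,x)\mapsto Y^{t,x}_t$ in mean square (with a non-quantitative modulus $\Phi$ built from $\rho$). Membership in $\mathcal{C}^{0,0}(\textbf{\textit{F}}^{B},[0,T]\times\R^{d};\R)$ requires an $\omega$-wise jointly continuous version of the random field, and $L^2$-continuity does not deliver that by itself. The usual upgrade would be Kolmogorov's continuity criterion, but that needs a power-type modulus $C(|t-t'|^{a}+|x-x'|^{a})$ with suitable exponents, which is precisely what the non-Lipschitz $\rho$-bound fails to provide; your Burkholder--Davis--Gundy step only controls $\sup_s$ for a \emph{fixed} pair $(t,x)$ and says nothing about joint continuity in the parameter. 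The second, more repairable, gap is the assertion that ``the $\rho$-term contributes nothing in the limit, exactly as in the uniqueness argument'': in the uniqueness proof of Theorem \ref{TE} the inhomogeneity is zero, whereas here you have $\E|Y^{t,x}_t-Y^{t',x'}_t|^2\le\kappa+C\int_t^T\rho(s,\E|Y^{t,x}_s-Y^{t',x'}_s|^2)ds$ with $\kappa>0$, so you need a stability statement for the comparison ODE (the maximal solutions with terminal datum $\kappa$ decrease to the zero solution as $\kappa\downarrow0$). Condition A only asserts uniqueness of the trivial solution for terminal datum $0$; the stability lemma can be derived from concavity and continuity of $\rho(t,\cdot)$, but it must be proved, not invoked.

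For comparison, the paper sidesteps both difficulties: it approximates $f$ by the Lepeltier--San Martin monotone Lipschitz sequences $\underline{f}_n$ and $\overline{f}_n$, uses the Lipschitz theory (Bahlali et al., Aman--Mrhardy) to get solutions $\underline{Y}^{t,x,n}$, $\overline{Y}^{t,x,n}$ whose value fields $\underline{u}_n,\overline{u}_n$ are already in $\mathcal{C}^{0,0}(\textbf{\textit{F}}^{B},[0,T]\times\R^{d};\R)$, and then passes to monotone limits: one limit is lower semicontinuous, the other upper semicontinuous, and the uniqueness of the solution of RBDSDE \eqref{5eddsr} (Theorem \ref{TE}) forces the two limits to coincide with $u$, which is therefore continuous $\omega$-wise, with the measurability inherited from the approximants. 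If you want to keep your quantitative approach, you would have to either supply a genuinely pathwise continuity argument (e.g., a sandwich by continuous fields, as the paper does) or strengthen $\rho$ to yield polynomial moment moduli so that Kolmogorov's criterion applies; otherwise the estimate you prove, correct as far as it goes, does not establish the stated conclusion.
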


\begin{proof}
For $n\in\N$, let
\begin{eqnarray*}
\underline{f}_{n}(t,x,y,z)=\underset{u \in \Q}\inf \left\{f(t,x,u,z)+n|y-u|\right\}.
\end{eqnarray*}
and
\begin{eqnarray*}
\overline{f}_{n}(t,x,y,z)=\underset{u \in \Q}\sup \left\{f(t,x,u,z)-n|y-u|\right\}.
\end{eqnarray*}
Since $f$ is continuous, with linear growth (see assumption $({\bf A5})$, it follows from Lepeltier and San Martin \cite{LS} or
K. Bahlali et al. \cite{4} that, for all $n\geq C$ and $( t, x, y, z),( t_{i}, x_{i}, y_{i}, z_{i})\in [0,T]\times \mathbb{R}^d\times \mathbb{R}\times \mathbb{R}^d$, $i=1,2$,
\begin{itemize}
\item[(i)] \ $\underline{f}_{n}( t, x, y, z) \leq f( t, x, y, z) \leq \bar{f}_{n}( t, x, y, z)$;
\item[(ii)] \ $\underline{f}_n( t, x, y, z)$ is non-decreasing in $n$ and $\overline{f}_n( t, x, y, z)$ is non-increasing in $n$.
\end{itemize}
Taking $\phi$ as $\overline{f}_{n}$ or $\underline{f}_{n}$
\begin{itemize}
\item[ (iii)] \ $ |\phi( t, x, y, z) | \leq \varphi_t+C(|x|+| y |+|z|)$.
\item[(iv)] \ $| \phi(t,x_{1},y_1,z)-\phi(t,x_{2},y_2,z)| \leq
n(| x_{1}-x_{2}|+ | y_{1}-y_{2}|)$. 
\item[(v)] \ $| \phi(t,x,y,z_1)-\phi(t,x,y,z_2)|^2 \leq
C| z_1-z_2|^2$.
\item[ (vi)] If  $(y_n,z_n) \rightarrow (y,z)$, then
$\phi( t, x, y_n, z_n) \rightarrow f( t, x, y, z)$ as $n\rightarrow
+\infty$.
\end{itemize}
According to assumption $(iv)$ and $(v)$, it follows from the works of Bahlali et al. \cite{4} or Aman et al. \cite{1} without the Neumann term, for each $(t,x)\in [0,T]\times \mathbb{R}^d$ and every $n\geq C$, RBDSDE associated to  $(l(X_{T}^{t,x}),\underline{f}_{n}, g, h)$ (resp. to $(l(X_{T}^{t,x}),\overline{f}_{n}, g, h)$) has a unique solution $(\underline{Y}^{t,x,n},\underline{Z}^{t,x,n},\underline{K}^{t,x,n})$ (resp. $(\overline{Y}^{t,x,n},\overline{Z}^{t,x,n},\overline{K}^{t,x,n})$). Moreover, it follows again from Bahlali et al. \cite{4} that $(\underline{Y}^{t,x,n},\underline{Z}^{t,x,n},\underline{K}^{t,x,n})$ (resp. $(\overline{Y}^{t,x,n},\overline{Z}^{t,x,n},\overline{K}^{t,x,n})$) converges to the minimal solution (resp. maximal solution) of RBDSDE \eqref{5eddsr}. Setting 
\begin{eqnarray}\label{v1}
\overline{u}_{n}(t,x)=\overline{Y}^{t,x,n}_t,\\\nonumber\\
\underline{u}_n(t,x)=\underline{Y}^{t,x,n}_t,\label{v2}	
\end{eqnarray}
 it follows also from Aman et al \cite{2} that $\overline{u}_{n}$ (resp. $\underline{u}_{n}$) belongs in $\mathcal{C}^{0,0}(\textbf{\textit{F}}_{}^{B},[0, T] \times \R^{d}; \R)$. On the other hand, according to $(ii)$ above and comparison Theorem 3.2 in \cite{4}, the sequence of random field $\overline{u}_{n}$ (resp. $\underline{u}_{n}$) is non-decreasing (resp.
non-increasing). Moreover, for $(t,x)\in [0,T]\times \R^d$ $\overline{u}_{n}$ (resp. $\underline{u}_{n}$) converge to $\overline{u}(t,x)=\overline{Y}^{t,x}_t$ (resp. $\underline{u}(t,x)=\underline{Y}^{t,x}_t$) which is low semi-continuous (resp. upper semi-continuous). Since in the section 2.2 we prove that RBDSDE \eqref{1.6} has a unique solution $(Y^{t,x},Z^{t,x},K^{t,x})$, then $\underline{Y}^{t,x}_t=Y^{t,x}=\overline{Y}^{t,x}_t$. Finally $\overline{u}=\underline{u}=u$. is both lower and upper semi-continuous, i.e., $u\in \mathcal{C}^{0,0}(\textbf{\textit{F}}_{}^{B},[0, T] \times \R^{d}; \R)$.
\end{proof}
The main result of this section is the following
\begin{theorem}\label{5ths1C}
Under conditions $({\bf A1})$-$({\bf A5})$, the random field
$u\in \mathcal{C}^{0,0}(\textbf{\textit{F}}_{}^{B},[0, T] \times \R^{q}; \R)$ defined by \eqref{ao} is a stochastic viscosity solution for SPDE \eqref{5edps}.
\end{theorem}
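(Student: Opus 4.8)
\noindent The membership $u\in\mathcal{C}^{0,0}(\textbf{\textit{F}}_{}^{B},[0,T]\times\R^{q};\R)$ is exactly the content of the Proposition just proved, so it remains only to show that $u$ is simultaneously a stochastic viscosity subsolution and a stochastic viscosity supersolution of SPDE \eqref{5edps} in the sense of Definition \ref{5df1}. The plan is: (1) use the Lipschitz‑coefficient version of the nonlinear Feynman--Kac link for the Lepeltier--San Martin regularizations $\underline{f}_n,\overline{f}_n$ already introduced, to identify $\underline{u}_n$ and $\overline{u}_n$ from \eqref{v2} and \eqref{v1} as stochastic viscosity solutions of the approximating equations; (2) transfer the one‑sided viscosity properties from the equations with data $\underline{f}_n,\overline{f}_n$ to the equation with data $f$ by a monotonicity comparison of the associated nonlinear operators; (3) pass to the monotone limit $n\to\infty$ using stability of stochastic viscosity sub/supersolutions under locally uniform convergence.

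For step (1): for each $n\geq C$ the generators $\underline{f}_n,\overline{f}_n$ are Lipschitz in $(x,y)$ with constant $n$ and satisfy the $z$‑estimate with constant $C$ (properties (iii)--(v) in the proof of the Proposition), while $g\in\mathcal{C}^{0,2,3}_b$, $l$ is Lipschitz and $h$ is continuous with polynomial growth. Hence the data $(l(X_T^{t,x}),\underline{f}_n,g,h)$ and $(l(X_T^{t,x}),\overline{f}_n,g,h)$ lie in the framework of Aman and Mrhardy \cite{2} with vanishing Neumann term (equivalently, Bahlali et al.\ \cite{4} together with \cite{2}), so $\underline{u}_n$ is a stochastic viscosity solution of $SPDE(\underline{f}_n,g,h,l)$ and $\overline{u}_n$ of $SPDE(\overline{f}_n,g,h,l)$; in particular $\underline{u}_n$ is a subsolution of the former and $\overline{u}_n$ a supersolution of the latter. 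For step (2): since $\underline{f}_n\le f\le\overline{f}_n$ pointwise and $\mathcal{A}_{\cdot,g}(\psi)=-L\psi-f(\cdot,\psi,\sigma^{*}D_x\psi)+\tfrac12\langle g,D_yg\rangle(\cdot,\psi)$ is non‑increasing in its $f$‑slot, at any contact point of $\underline{u}_n$ with an admissible test field $\psi=\eta(\cdot,\cdot,\varphi)$ a two‑case discussion of the $\min$ gives the subsolution inequality for $f$ (if $\underline{u}_n-h\le0$ it is immediate; if $\mathcal{A}_{\underline{f}_n,g}(\psi)-D_y\psi\,D_t\varphi\le0$ then $\mathcal{A}_{f,g}(\psi)-D_y\psi\,D_t\varphi\le\mathcal{A}_{\underline{f}_n,g}(\psi)-D_y\psi\,D_t\varphi\le0$), and symmetrically $\overline{u}_n$ is a supersolution of $SPDE(f,g,h,l)$ using $\mathcal{A}_{f,g}(\psi)\ge\mathcal{A}_{\overline{f}_n,g}(\psi)$. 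The terminal conditions are automatic: $X_T^{T,x}=x$ and $Y_T^{t,x}=l(X_T^{t,x})$ force $\underline{u}_n(T,\cdot)=\overline{u}_n(T,\cdot)=l(\cdot)=u(T,\cdot)$.

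For step (3): from the proof of the Proposition, $\underline{u}_n\uparrow u$ and $\overline{u}_n\downarrow u$ pointwise, and $\underline{u}_n,\overline{u}_n,u$ are all continuous in $(t,x)$; Dini's theorem upgrades these to locally uniform convergence in $(t,x)$, $\P_2$‑a.s. Now invoke the stability of stochastic viscosity sub/supersolutions under locally uniform limits: passing through the Doss--Sussmann change of unknown $w\mapsto\varepsilon(\cdot,\cdot,w)$, which is a $y$‑diffeomorphism locally uniform in all its arguments, this reduces $\omega_2$‑wise to the classical stability of deterministic viscosity sub/supersolutions of the associated obstacle PDE with random coefficients (cf.\ El Karoui et al.\ \cite{15}), and that auxiliary PDE is the \emph{same} for all $n$ after step (2). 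Hence $u$ is a stochastic viscosity subsolution (from the $\underline{u}_n$) and a stochastic viscosity supersolution (from the $\overline{u}_n$) of SPDE \eqref{5edps}; combined with $u\in\mathcal{C}^{0,0}(\textbf{\textit{F}}_{}^{B},[0,T]\times\R^{q};\R)$, Definition \ref{5df1}$(c)$ gives that $u$ is a stochastic viscosity solution of SPDE \eqref{5edps}.

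The operator comparison of step (2) and the terminal conditions are routine; the main obstacle is step (3), namely making rigorous in the stochastic setting the perturbation/contact‑point argument behind stability under monotone limits, i.e.\ localizing the extremum of $\underline{u}_n-\psi$ (resp.\ $\overline{u}_n-\psi$) while keeping the test field $\mathcal{F}_\tau^{B}$‑measurable after composition with the random diffeomorphism $\eta$. This is, however, exactly the machinery already developed by Buckdahn and Ma and by Aman and Mrhardy; crucially, the non‑Lipschitz structure of $f$ never enters the viscosity manipulations, having been absorbed entirely into the existence and uniqueness of the limiting reflected BDSDE \eqref{5eddsr} provided by Theorem \ref{TE}.
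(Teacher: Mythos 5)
Your proposal is correct and, at its core, follows the same strategy as the paper: approximate $f$ by the Lepeltier--San Martin sequences $\underline{f}_n,\overline{f}_n$, identify $\underline{u}_n,\overline{u}_n$ from \eqref{v2}--\eqref{v1} as stochastic viscosity solutions of the approximating reflected SPDEs via Aman--Mrhardy \cite{2} (without Neumann term), and pass to the limit at contact points. Where you genuinely deviate is in how the limit is organized. You first use the sandwich $\underline{f}_n\le f\le\overline{f}_n$ and the monotonicity of $\mathcal{A}_{\cdot,g}$ in its $f$-slot to turn each $\underline{u}_n$ (resp.\ $\overline{u}_n$) into a subsolution (resp.\ supersolution) of the \emph{limiting} equation SPDE$(f,g,h,l)$, and then invoke a fixed-equation stability statement, upgrading the monotone pointwise convergence to locally uniform convergence by Dini. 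The paper instead keeps the equation varying with $n$: it fixes $\omega_2$, takes a test triple $(\tau,\xi,\varphi)$ for $u$, produces perturbed contact points $(\tau_j,\xi_j,\varphi_j)$ for $\underline{u}_{n_j}$ via Example 8.2 of El Karoui et al.\ \cite{15} and Lemma 6.1 of Crandall et al.\ \cite{10}, checks $\underline{u}_{n_j}(\tau_j,\xi_j)-h(\tau_j,\xi_j)\ge 0$ for large $j$ so that the min reduces to the operator inequality, and passes to the limit in $\mathcal{A}_{\underline{f}_{n_j},g}(\psi_j)$ using the key property (vi) of the approximations, namely $\underline{f}_{n_j}(t,x,y_j,z_j)\to f(t,x,y,z)$ along converging arguments. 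Your reorganization buys a cleaner limit step (the generator is frozen at $f$, so property (vi) is not needed there), at the price of needing locally uniform convergence (Dini) and of treating "stability of stochastic viscosity sub/supersolutions under locally uniform limits" as an available black box; be aware that no such theorem is quoted in the paper or its references in that form, and unpacking it amounts to exactly the perturbed-contact-point argument (with the two-case discussion of the min and the $\mathcal{F}_\tau^B$-measurability of the perturbed test data handled $\omega_2$-wise through $\eta$) that the paper writes out explicitly. So your route is not a gap relative to the paper's own level of rigor, but it is not more elementary either -- it relocates, rather than removes, the main technical step. Two small further points: the paper proves both the sub- and supersolution properties from the single sequence $\underline{u}_n$ (with a remark that $\overline{u}_n$ works as well), whereas you naturally split the roles between $\underline{u}_n$ and $\overline{u}_n$, which is arguably tidier; and your verification of the terminal condition and of $u\ge h$ via $Y^{t,x}\ge h(\cdot,X^{t,x})$ matches \eqref{ce2} in the paper.
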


\begin{proof}
Since for all $(t,x)\in [0,T]\times\R^d,\, u(t,x)=Y^{t,x}$, we have $u(T,x)=l(x)$. Moreover for all $(\tau,\xi)\in\mathcal{M}_{0,T}^{B}\times L^{0}(\mathcal{F}_{\tau}^{B}, \R^{q})$,
\begin{eqnarray}\label{ce2}
u(\tau,\xi)=Y_{\tau}^{\tau,\xi}\geq h(\tau,\xi)\; \; \P_2-a.s.
\end{eqnarray}
Now, it remains to show that $u$ satisfies \eqref{df1} and \eqref{df1}.

For this purpose, For every $n\geq C$, let define $\underline{u}_{n}: \Omega_2\times[0,T]\times \mathbb{R}^q\mapsto \R$ by  \eqref{v2}. Then, with the same argument as above, the sequence of random field $\underline{u}_{n}$ converges to random field $u$ defined by \eqref{ao}. Moreover, using
Theorem 3.1 (without the Neumann term) in Aman et al., \cite{2}, $\underline{u}_n$ is a stochastic viscosity solution of the parabolic SPDE associated to the data $(\underline{f}_n,g,h,l)$,
\begin{eqnarray}\label{ama}
\left\{
\begin{array}{ll}
&\displaystyle \min\Big\{\underline{u}_{n}(t,x)- h(t,x)\;,\;\frac{\partial \underline{u}_{n}}{\partial t}(t,x)+\mathcal{L}\underline{u}_{n}(t,x)+\underline{f}_{n}(t,x,\underline{u}_{n}(t,x),(\sigma^*D\underline{u}_{n})(t,x))\\\\
&\displaystyle\hspace{3.5cm}+g(t,x,\underline{u}_{n}(t,x))\overleftarrow{\dot{B}_{t}}\Big\},\; \; \; \; (t,x)\in[0,T]\times \mathbb{R}^q, \\\\
&\displaystyle \underline{u}_{n}(T,x)= l(x)\hbox{,}\; \; \; \;x\in\mathbb{R}^q.
\end{array}
\right.
\end{eqnarray}
For $\omega_2\in \Omega$ be fixed such that
\begin{eqnarray*}
\underline{u}_n(\omega_2,t,x)\rightarrow u(\omega_2,t,x)	\;\; \mbox{as}\;\; n\rightarrow +\infty,
\end{eqnarray*}
let us consider $(\tau,\xi,\varphi)\in\mathcal{M}_{0,T}^{B}\times L^{0}(\mathcal{F}_{\tau}^{B}, \R^{q})\times C^{1,2}(\mathcal{F}_{\tau}^{B}, [0, T] \times \R^{q};\R)$ such that $0<\tau(\omega_{2})<T$
$$u(\omega_{2},t,x)-\psi(\omega_{2}, t,x)\leq 0= u(\tau(\omega_{2}),\xi(\omega_{2}))-\psi(\tau(\omega_{2}),\xi(\omega_{2})),$$
for all $(t,x)$ in a neighborhood $\mathcal{V}(\tau(\omega_{2}),\xi(\omega_{2}))$ of $(\tau(\omega_{2}),\xi(\omega_{2}))$, where $\psi(t,x)\overset{}{=}\eta(t,x,\varphi(t,x))$. 
From Example 8.2 in El Karoui et al. (1997) and
Lemma 6.1 in Crandall et al. (1992), there exists sequence $(\tau_{j}(\omega_{2}),\xi_{j}(\omega_{2}),\varphi_{j}(\omega_{2}))_{j\geq1}\in[0,T]\times \R^{q}\times C^{1,2}([0, T] \times \R^{q};\R)$ such that\\ $n_{j}\longrightarrow+\infty,\; \tau_{j}(\omega_{2})\longrightarrow\tau_{}(\omega_{2}),\; \xi_{j}(\omega_{2})\longrightarrow\xi_{}(\omega_{2}),\; \varphi_{j}(\omega_{2})\longrightarrow\varphi_{}(\omega_{2})$ and $$\underline{u}_{n_{j}}(\omega_{2},t,x)-\psi_{j}(\omega_{2},t,x)\leq \underline{u}_{n_{j}}(\tau_{j}(\omega_{2}),\xi_{j}(\omega_{2}))-\psi_{j}(\tau_{j}(\omega_{2}),\xi_{j}(\omega_{2})),$$
for all $(t,x)$ in a neighborhood $\mathcal{V}(\tau_{j}(\omega_{2}),\xi_{j}(\omega_{2}))\subset\mathcal{V}(\tau(\omega_{2}),\xi(\omega_{2}))$ and a suitable subsequence $(\underline{u}_{n_{j}})_{j\geq1}$, where $\psi_{j}(t,x)\overset{}{=}\eta(t,x,\varphi_{j}(t,x))$.  \\ From \eqref{cp2} and \eqref{ce2}, it follows that for $j$ large enough $\underline{u}_{n_{j}}(\tau_{j},\xi_{j})-h(\tau_{j},\xi_{j})\geq 0 \; \P_2-a.s.$.\\ 
Now, using the fact
that $\underline{u}_{n_{j}}$ is a stochastic viscosity solution for SPDE$(\underline{f}_{n_{j}},g,h,l)$, we obtain
$\P_{2}-a.s.$,\ on $\left\{0<\tau_j<T\right\}$,
\begin{eqnarray}\label{5anC}
\mathcal{A}_{\underline{f}_{n_{j}},g}(\psi_j(\tau_j,\xi_j))- D_y\psi_j(\tau_j,\xi_j)D_t\varphi_j(\tau_j,\xi_j)\leq0.
\end{eqnarray}
From the properties of $\eta$, $\psi_j(\tau_j,\xi_j)\overset{}{=}\eta(\tau_j,\xi_j,\varphi_j(\tau_j,\xi_j))$ converges to  $\psi(\tau,\xi)\overset{}{=}\eta(\tau,\xi,\varphi(\tau,\xi))$. 
\\Moreover, from the properties of $\underline{f}_{n_{j}}$,
\begin{eqnarray*}
\mathcal{A}_{\underline{f}_{n_{j}},g}(\psi_j(\tau_j,\xi_j))&=&-\mathcal{L}\psi_j(\tau_j,\xi_j)-
\underline{f}_{n_{j}}\big(\tau_j,\xi_j,\psi_{j}(\tau_j,\xi_j),\sigma^*(\tau_j,\xi_j)D_x\psi_{j}(\tau_j,\xi_j)\big)\\&&+\frac{1}{2}\langle g,D_yg\rangle(\tau_j,\xi_j).
\end{eqnarray*}
converges to
\begin{eqnarray*}
\mathcal{A}_{f,g}(\psi(\tau,\xi))&=&-\mathcal{L}\psi(\tau,\xi)-
f\big(\tau,\xi,\psi(\tau,\xi),\sigma^*(\tau,\xi)D_x\psi(\tau,\xi)\big)+\frac{1}{2}\langle g,D_yg\rangle(\tau,\xi).
\end{eqnarray*}
Hence, taking the limit as $j\longrightarrow\infty$ in \eqref{5anC}, we obtain
\begin{eqnarray*}\label{}
\mathcal{A}_{f,g}(\psi(\tau,\xi))-D_y\psi(\tau,\xi)D_t\varphi(\tau,\xi)\leq 0.
\end{eqnarray*}
and we get that $u$ 
is a stochastic viscosity subsolution for the SPDE$(f,g,h,l)$. Similarly, we prove that $u$ is a stochastic viscosity supersolution for the SPDE$(f,g,h,l)$. So we conclude that $u$ is a stochastic viscosity solution for the SPDE$(f,g,h,l)$.

\end{proof}
\begin{remark}
Replace $\underline{u}_{n}$ by $\overline{u}_{n}$, we obtain with some adaptation the same conclusion.
\end{remark}

\end{document}